\numberwithin{equation}{section}
\newcommand\N{{\mathbb N}}
\DeclareMathOperator{\supp}{supp}
\DeclareMathOperator{\Area}{Area}
\DeclareMathOperator{\linearspan}{span}
\renewcommand{\Re}{\text{Re}}
\renewcommand{\Im}{\text{Im}}
\theoremstyle{plain}
  \newtheorem{theorem}[subsection]{Theorem}
  \newtheorem{proposition}[subsection]{Proposition}
  \newtheorem{corollary}[subsection]{Corollary}
  \newtheorem{conjecture}[subsection]{Conjecture}
  \newtheorem{assumption}[subsection]{Assumption}  
  \theoremstyle{remark}
\theoremstyle{definition}
  \newtheorem{remark}[subsection]{Remark}
  \newtheorem{example}[subsection]{Example}
\begin{document}
\include{psfig}
\title[Probabilistic Approximation in RKHS]{On the Probabilistic Approximation \\ in Reproducing Kernel Hilbert Spaces}



\author{Dongwei Chen}
\address{Department of Mathematics, Colorado State University, CO, US}
\email{dongwei.chen@colostate.edu}

\author{Kai-Hsiang Wang}
\address{Department of Mathematics, Northwestern University, IL, US}
\email{kai-hsiangwang2025@u.northwestern.edu}
 
\begin{abstract}
This paper studies the probabilistic function approximation problem over reproducing kernel Hilbert spaces.
We show the existence and uniqueness of the optimizer under mild assumptions. 
Furthermore, we generalize the celebrated representer theorem to our setting, and especially when the probability measure is finitely supported, or the Hilbert space is finite-dimensional, we show that the probabilistic approximation problem turns out to be a measure quantization problem, which connects the probabilistic function approximation to the sampling theory.  Some discussions and examples are also given when the reproducing kernel Hilbert space is infinite-dimensional and the measure is infinitely supported. 

\end{abstract}

\keywords{Probabilistic Approximation; Reproducing Kernel Hilbert Spaces; Representer Theorem; Measure Representation; Kernel Mean Embedding}
\subjclass[2020]{46E22}

\maketitle


\section{Introduction and Problem Formulation}
Let $X$ be a set, $\mathbb{F} = \mathbb{R}$ (the set of real numbers) or $ \mathbb{C}$ (the set of complex numbers), and $\mathscr{F}(X,\mathbb{F})$ be the set of functions from $X$ to $\mathbb{F}$. 
$\mathscr{F}(X,\mathbb{F})$ is naturally equipped with the vector space structure over $\mathbb{F}$ by pointwise addition and scalar multiplication: 
\begin{equation*}
    (f+h)(x) = f(x) +h(x), \ (\beta \cdot f)(x) = \beta \cdot f(x) \;\text{for $x\in X$ and $\beta \in \mathbb{F}$}.
\end{equation*}
A vector subspace $\mathscr{H}\subset \mathscr{F}(X, \mathbb{F})$ is said to be a \textit{reproducing kernel Hilbert space} (RKHS) on $X$ if 
\begin{itemize}
    \item $\mathscr{H}$ is endowed with a Hilbert space structure $ \langle \cdot, \cdot \rangle$.
    Our convention is that it is $\mathbb{F}$-linear in the first argument. 
    \item for every $x \in X$, the linear evaluation functional $E_x: \mathscr{H} \rightarrow \mathbb{F} $, defined by $E_x(f)  =f(x)$, is bounded. 
\end{itemize}
If $\mathscr{H}$ is an RKHS on $X$, then Riesz representation theorem shows that for each $x \in X$, there exists a unique vector $k_x \in \mathscr{H}$ such that for any $f \in \mathscr{H}$, 
\begin{equation*}
    E_x(f)  = \langle f, k_x \rangle = f(x).
\end{equation*}
The function $k_x$ is called the reproducing kernel for the point $x$, and the function $K: X \times X \rightarrow \mathbb{F}$ defined by $K(y,x) = k_x(y)$ is called the \textit{reproducing kernel} for $\mathscr{H}$.
One can check that $K$ is indeed a \textit{kernel function}, meaning that for any $n\in \N$ and any $n$ distinct points $\{x_1, \cdots, x_n\} \subset X$, the matrix $(K(x_i,x_j))$ is symmetric (Hermitian) and positive semi-definite. 
It is well known that there is a one-to-one correspondence between RKHSs and kernel functions on $X$: by Moore's Theorem \cite{moore1general}, if $K: X \times X \rightarrow \mathbb{F}$ is a kernel function, then there exists a unique RKHS  $\mathscr{H}$ on $X$ such that $K$ is the reproducing kernel of $\mathscr{H}$.
We let $\mathscr{H}(K)$ denote the unique RKHS with reproducing kernel $K$, and define the \textit{feature map} $\phi: X \rightarrow \mathscr{H}(K)$ by $$\phi(x) = k_x.$$   RKHS was introduced by Zaremba \cite{zaremba1907lequation} and Mercer \cite{mercer1909xvi} and systematically studied by Aronszajn in 1950s \cite{aronszajn1950theory}.
We refer to \cite{ paulsen2016introduction, saitoh2016theory} for more details on the RKHS and its applications.

One of the interesting topics on the RKHS is interpolation. 
Let $\mathscr{H}(K)$ be an RKHS on $X$, $F= \{x_1,\cdots, x_N\}$ a finite set of distinct points in $X$, and $\{c_1, \cdots, c_N\} \subset \mathbb{F}$. 
If the matrix $(K(x_i, x_j))$ is invertible, then there exists $ f\in \mathscr{H}(K)$ such that $f(x_i) = c_i$ for all $1 \leq i \leq N$. 
However, if $(K(x_i, x_j))$ is not invertible, such $f$ may not exist. 
In this case, one is often interested in finding the best approximation in $\mathscr{H}(K)$ with the least square error:
$$\underset{f \in \mathscr{H}(K)}{\inf} \sum\limits_{i=1}^N \vert f(x_i) - c_i \vert ^2.$$
Let $\mathscr{P}(X)$ be the set of probability measures on $X$ and $\mu_N := \frac{1}{N} \sum\limits_{i=1}^N \delta_{x_i} \in \mathscr{P}(X)$.
Then the above least squares problem is equivalent to 
\begin{equation*}
    \underset{f \in \mathscr{H}(K)}{\text{inf}} \ \int_{X} \vert f(x) - g(x) \vert ^2 d\mu_N(x) = \underset{f \in \mathscr{H}(K)}{\text{inf}} \ \frac{1}{N} \sum_{i=1}^N \vert f(x_i) - c_i \vert ^2,
\end{equation*}
where $g: X \rightarrow \mathbb{F}$ is such that $g(x_i) = c_i$ for all $i=1, \cdots, N$.
This inspires us to replace $\mu_N$ with any probability measure $\mu  \in \mathscr{P}(X)$ and consider the \textit{probabilistic function approximation problem} over an RKHS.  

The general formulation is as follows. 
Throughout the paper, we assume that $X$ is a Polish space and all functions and measures are Borel measurable. By an $\mathbb{F}$-measure on $X$, we mean  a complex measure on $X$ if $\mathbb{F}=\mathbb{C}$ and a signed measure on $X$ if $\mathbb{F}=\mathbb{R}$. 
Let $g:X \rightarrow \mathbb{F}$ be a given target function, $\mu \in \mathscr{P}(X)$, and $c:\mathbb{F} \times \mathbb{F} \rightarrow \mathbb{R}^{+}$ be a non-negative cost (loss) function, where $\mathbb{R}^{+}$ is the set of nonnegative real numbers. 
Then consider the following minimization problem
\begin{equation}\label{muCostFunction}
    \underset{f \in \mathscr{H}(K)}{\text{inf}} \ \int_{X} c( f(x), g(x)) d\mu(x).
\end{equation}

If $\{x_i\}_{i=1}^N \subset X$ are independent and identically distributed samples of the probability measure $\mu$, then $\frac{1}{N}\sum\limits_{i=1}^N c( f(x_i), c_i)$ is often called the empirical risk and $\int_{X} c(f(x), g(x)) d\mu(x)$ the expected risk, where $g(x_i) = c_i$ for each $i, \ 1 \leq i \leq N$. Many efforts have been made to bound the gap between the empirical risk and the expected risk under different settings. For example, the statistical learning theory developed by Vapnik and Chervonenkis gave probabilistic bounds using the Vapnik-Chervonenkis dimension of the hypothesis function space \cite{vapnik1998statistical, vapnik2013nature, vapnik1974theory}. F. Cucker and S. Smale in \cite{cucker2002mathematical} obtained other probabilistic bounds when $c(w, z) = |w-z|^2$. 

In this work, we unify these two problems by allowing the probability measure $\mu$ to be discrete or absolutely continuous, and we consider the probabilistic function approximation problem over an RKHS. It is worth mentioning that A. D. Aydin and A. Gheondea \cite{aydin2021probability} studied function approximation in separable RKHSs using finite linear combinations of kernel functions at given points and projections onto the spans of kernels at random points.

Another motivation for this study comes from the Nevanlinna-Pick interpolation problem in complex analysis and operator theory \cite{agler2023pick, nevanlinna1919uber, pick1915beschrankungen}. Given distinct points \( z_1, z_2, \cdots, z_n \) in the open unit disk \(D = \{ z \in \mathbb{C}: |z| < 1 \}\) and target values \( t_1, t_2, \cdots, t_n \in \mathbb{C} \), the Nevanlinna-Pick interpolation problem is to find a holomorphic function \( f: D \to \mathbb{C} \) such that:
\begin{enumerate}
    \item[$(a)$] \( f(z_j) = t_j \) for all $j = 1, \cdots, n$. (Interpolation Condition)
    \item[$(b)$] for any $z \in D,  | f(z) | \leq 1$. (Pointwise Constraint)
\end{enumerate}
It is well known that the solution of Nevanlinna-Pick interpolation problem exists if and only if the \emph{Pick matrix} \( P = (\frac{1 - t_i \overline{t_j}}{1 - z_i \overline{z_j}} )_{i,j=1}^n \) is positive semi-definite \cite{pick1915beschrankungen}. Nevanlinna-Pick interpolation has been applied in many areas, such as optimal control \cite{francis1987course, tannenbaum1982modified} and signal processing \cite{blomqvist2003matrix}.

Inspired by the Nevanlinna-Pick interpolation problem, we consider the constrained version of the probabilistic approximation problem over reproducing kernel Hilbert spaces.
That is to say, for $M>0$,  consider the following problem
\begin{equation}\label{normConstraint}
    \underset{f \in \mathscr{H}(K)}{\text{inf}} \ \int_{X} c(f(x),g(x))  d\mu(x) \  \text{such that} \ \|f\|_{\mathscr{H}(K)} \leq M.
\end{equation}
Under mild conditions, the norm constraint $\| f \|_{\mathscr{H}(K)} \leq M$  leads to a pointwise constraint as in the Nevanlinna-Pick interpolation. For example, if the kernel $K$ is normalized, i.e., for any $x \in X$, $K(x, x)=\|k_x\|_{\mathscr{H}(K)}^2 =1$, then  
$$|f(x)| = |\langle f, k_x \rangle_{\mathscr{H}(K)}| \leq \| f \|_{\mathscr{H}(K)} \|k_x\|_{\mathscr{H}(K)} \leq M < +\infty;$$
and when $X$ is compact and $K: X \times X \rightarrow \mathbb{F}$ is continuous, we have
$$|f(x)| = |\langle f, k_x \rangle_{\mathscr{H}(K)}| \leq \| f \|_{\mathscr{H}(K)} \|k_x\|_{\mathscr{H}(K)} \leq M \ \underset{x \in X}{\text{max}} \sqrt{K(x, x)} < +\infty.$$

Here is another problem we consider. Instead of letting $\|f\|_{\mathscr{H}(K)} \leq M$ and controlling the norm directly, people often add an extra regularization term to the least squares to minimize (See Theorem \ref{Representer}). Especially in statistical regression and machine learning, a regularization term is preferred to perform variable selection, enhance the prediction accuracy, and prevent overfitting.
Examples of such practice are ridge regression \cite{hoerl1970ridge} and Lasso regression \cite{tibshirani1996regression}.
In this work, we consider the following minimization problem with regularization of Tikhonov type:
\begin{equation}\label{muPenalty}
    \underset{f \in \mathscr{H}(K)}{\text{inf}} \ \int_{X} c(f(x),g(x))  d\mu(x)+ \lambda \Vert f \Vert_{\mathscr{H}(K)}^p,
\end{equation}
where $\lambda>0$ is called the regularization parameter; it balances minimizing the approximation error and regulating the norm $\|f\|_{\mathscr{H}(K)}$.

We first establish the existence and uniqueness of Problem \ref{muCostFunction}, Problem \ref{normConstraint}, and Problem \ref{muPenalty} under mild assumptions in Section \ref{section: existence and uniqueness}. In Section \ref{section4}, motivated by the representer theorem stating that the optimizer minimizing the regularized loss function is in the linear span of the kernel function at the given points, we show that the optimizers of Problem \ref{normConstraint} and Problem \ref{muPenalty} have a representer-type structure, which is known as \emph{kernel mean embedding} \cite{muandet2017kernel, song2008learning}. 

Our main contribution in this work is about the representer structure in Theorem \ref{GRepresenter1} and Corollary \ref{GRepresenter2}. We show that
if $\hat{f}$ is the unique optimizer of Problem \ref{muPenalty} (See Theorem \ref{muPenaltyThm}), and the probability measure $\mu$ satisfies that for any $\mathbb{F}$-measure $\xi$ on $X$ with $\supp (\xi) \subset \supp (\mu)$, $$\int_X \|\phi(x)\|_{\mathscr{H}(K)}d|\xi|(x)<+\infty,$$
then there exists a sequence of $\mathbb{F}$-measures $\{\nu_n\}_{n=1}^{\infty}$ on $X$ with  $\supp(\nu_n) \subset \supp(\mu)$ such that 
\begin{equation*}
    \hat{f} = \underset{n \rightarrow \infty}{\text{lim}}\int_{X} \phi(x) d\nu_n(x).
\end{equation*} 
Furthermore, if $\mu$ is finitely supported, or $\mathscr{H}(K)$ is finite-dimensional, there exists an $\mathbb{F}$-measure $\nu$ on $X$ with  $\supp(\nu) \subset \supp(\mu)$ such that 
\begin{equation*}
    \hat{f} = \int_{X} \phi(x) d\nu(x),
\end{equation*} 
and Corollary \ref{GRepresenter2} shows that such $\nu$ can be finitely supported.
This connects the probabilistic function approximation to the sampling theory and indicates that the probabilistic approximation problem turns out to be a measure quantization and sampling problem. 

In Section \ref{section:conjecture}, we further conjecture that the optimizer $\hat{f}$ of Problem \ref{muPenalty} has a universal representation form even when the RKHS is infinite-dimensional and the measure is infinitely supported:
\begin{equation*}
    \hat{f} = \int_{X} \phi(x) d\xi(x),
\end{equation*} 
where $\xi$  is an $\mathbb{F}$-measure on $X$ with $\supp(\xi) \subset \supp(\mu)$. 
We also present an example in the Hardy space to support our proposal. Finally, in Section \ref{sec:conclusion}, we conclude the paper by summarizing our main results and outlining directions for future work.


\section{Existence and Uniqueness}\label{section: existence and uniqueness}
In this section, we show the existence and uniqueness of Problem \ref{muCostFunction}, Problem \ref{normConstraint}, and Problem \ref{muPenalty} under different settings. 
We first recall with the following theorem showing the existence of the optimizer to minimize the least squares $\sum\limits_{i=1}^N \vert f(x_i) - c_i \vert ^2$ and describing its representer-type structure: 
\begin{theorem}[Theorem 3.8 in \cite{paulsen2016introduction}]
    Let $\mathscr{H}(K)$ be an RKHS on $X$, $F= \{x_1,\cdots, x_N\}$ a finite set of distinct points in $X$, and $v = (c_1, \cdots, c_N)^{T} \in \mathbb{F}^{N}$. 
    Let $Q =(K(x_i, x_j))$ and $\mathscr{N}(Q)$ the null space of $Q$. 
    Then there exists $w = (\alpha_1, \cdots, \alpha_N)^T\in \mathbb{F}^N$ with $v-Qw \in \mathscr{N}(Q)$. If we let 
\begin{equation*}
            \hat{f} = \alpha_1 k_{x_1} + \cdots + \alpha_N k_{x_N}, 
\end{equation*}
then $\hat{f}$ minimizes the least squares error.
Besides, among all such minimizers in $\mathscr{H}(K)$, $\hat{f}$ is the unique function with the minimum norm. 
\end{theorem}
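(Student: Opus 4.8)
The plan is to recast the problem through the sampling operator and its adjoint, after which everything reduces to the elementary geometry of orthogonal projection. Define the bounded linear evaluation operator $T\colon \mathscr{H}(K) \to \mathbb{F}^N$ by $Tf = (f(x_1),\dots,f(x_N))^T$; by the reproducing property $f(x_i) = \langle f, k_{x_i}\rangle$, so that the least square error is exactly $\|Tf - v\|^2$ in the Euclidean norm on $\mathbb{F}^N$. A direct computation against the standard inner product shows the adjoint is $T^\ast w = \sum_{i=1}^N \alpha_i k_{x_i}$ for $w = (\alpha_1,\dots,\alpha_N)^T$, so that $\hat f = T^\ast w$; moreover, since $\langle k_{x_i}, k_{x_j}\rangle = K(x_j,x_i)$, one gets $TT^\ast = Q$. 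The conceptual heart is that $\mathscr{R}(T^\ast) = \operatorname{span}\{k_{x_1},\dots,k_{x_N}\}$ is finite-dimensional, and any component of $f$ orthogonal to it is annihilated by every $E_{x_i}$ while only increasing $\|f\|$; this is what forces both the optimizer and its minimum-norm representative into the span of the kernels.

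First I would establish the existence of $w$. Because $Q$ is Hermitian it is self-adjoint on $\mathbb{F}^N$, whence $\mathbb{F}^N = \mathscr{R}(Q) \oplus \mathscr{N}(Q)$ orthogonally. Writing $v = v_1 + v_2$ with $v_1 \in \mathscr{R}(Q)$ and $v_2 \in \mathscr{N}(Q)$, any $w$ with $Qw = v_1$ satisfies $v - Qw = v_2 \in \mathscr{N}(Q)$. Equivalently, the stated condition $v - Qw \in \mathscr{N}(Q)$ is precisely the normal equation $Q(Qw - v)=0$ for the least square problem $\min_w \|Qw - v\|^2$, which is always solvable; this is exactly the point where the possible non-invertibility of $Q$ is absorbed.

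Next I would verify that $\hat f$ is a minimizer and isolate the minimum-norm one. The key range identity is $\mathscr{R}(T) = \mathscr{R}(TT^\ast) = \mathscr{R}(Q)$, which follows from self-adjointness of $TT^\ast$ together with $\mathscr{N}(TT^\ast) = \mathscr{N}(T^\ast) = \mathscr{R}(T)^\perp$; as $\mathscr{R}(T) \subseteq \mathbb{F}^N$ is finite-dimensional, no closure issues arise despite $\mathscr{H}(K)$ being possibly infinite-dimensional. Hence minimizing $\|Tf - v\|^2$ over $f$ is the same as minimizing $\|u - v\|^2$ over $u \in \mathscr{R}(Q)$, whose unique solution is the orthogonal projection $u^\ast = v_1 = Qw = T\hat f$, so $\hat f$ attains the infimum. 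The full set of minimizers is the affine space $\{f : Tf = v_1\} = \hat f + \mathscr{N}(T)$; since $\hat f = T^\ast w \in \mathscr{R}(T^\ast) = \mathscr{N}(T)^\perp$ (again using finite-dimensionality to guarantee $\mathscr{R}(T^\ast)$ is closed), the Pythagorean identity gives $\|f\|^2 = \|\hat f\|^2 + \|g\|^2$ for $f = \hat f + g$ with $g \in \mathscr{N}(T)$, so $\|f\| \geq \|\hat f\|$ with equality iff $g = 0$, yielding uniqueness.

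I expect the only genuine obstacle to be bookkeeping rather than depth: one must confirm that the infimum is actually attained and that the relevant orthogonal decompositions are valid even though $\mathscr{H}(K)$ need not be finite-dimensional. Both concerns dissolve once it is observed that every subspace entering the argument, namely $\mathscr{R}(T)\subseteq\mathbb{F}^N$ and $\mathscr{R}(T^\ast)=\operatorname{span}\{k_{x_i}\}$, is finite-dimensional and therefore closed, so projections exist and the adjoint relations $\mathscr{N}(T)^\perp=\mathscr{R}(T^\ast)$ and $\mathscr{R}(T)^\perp=\mathscr{N}(T^\ast)$ hold without any completeness caveat. Care is also needed to track complex conjugation in the adjoint computation under the convention that $\langle\cdot,\cdot\rangle$ is linear in the first argument, but this is purely routine.
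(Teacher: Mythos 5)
Your proof is correct. Note that the paper itself does not prove this statement---it quotes it as Theorem 3.8 of Paulsen--Raghupathi---and your argument is essentially the standard one: your operator language with $T$, $T^{\ast}$, $TT^{\ast}=Q$ and the decomposition of a minimizer as $\hat f + g$ with $g\in\mathscr{N}(T)$ is exactly the usual splitting of $\mathscr{H}(K)$ into $\operatorname{span}\{k_{x_1},\dots,k_{x_N}\}$ and its orthogonal complement, with the normal equation $Q(Qw-v)=0$ absorbing the possible non-invertibility of $Q$.
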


Our first result is about Problem \ref{muCostFunction} when the cost function $c: \mathbb{F} \times \mathbb{F} \rightarrow \mathbb{R}^+$ is given by $c(f(x), g(x)) = \vert f(x) - g(x) \vert ^p$ where  $1 \leq p<\infty$, and the feature map $\phi: X \rightarrow \mathscr{H}(K)$ is a continuous $p$-frame
for $\mathscr{H}(K)$ with respect to $\mu$; that is, there exist $0<A\leq B < +\infty $ such that for any $f \in {\mathscr{H}(K)}$, 
$$A\|f\|^p_{\mathscr{H}(K)} \leq \int_X |\langle f, \phi_x \rangle|^p d\mu(x) \leq B \|f\|^p_{\mathscr{H}(K)}.$$

\begin{theorem} \label{pexistance} 
  Let $\mathscr{H}(K)$ be an RKHS on $X$ with the feature map $\phi$.  
  Let $\mu \in \mathscr{P}(X)$ and $g \in L^p(X,\mu)$, where $1 \leq p<\infty$.
  Assume that $\phi: X \rightarrow \mathscr{H}(K)$ is a continuous $p$-frame for $\mathscr{H}(K)$ with respect to $\mu$. 
  Then the following problem 
    \begin{equation*}\label{muPNorm}
    \underset{f \in \mathscr{H}(K)}{\text{inf}} \ \int_{X} \vert f(x) - g(x) \vert ^p d\mu(x)
\end{equation*}
admits an optimizer $\hat{f} \in \mathscr{H}(K)$. Furthermore, if $p>1$, 
the optimizer is unique.
\end{theorem}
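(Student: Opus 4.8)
The plan is to recognize \eqref{muPNorm} as a best-approximation problem in $L^p(X,\mu)$ and to harvest compactness from the Hilbert-space structure of $\mathscr{H}(K)$ through the reproducing property. First I would use the reproducing identity $f(x) = \langle f, \phi(x)\rangle$ to rewrite the continuous $p$-frame inequalities as
\begin{equation*}
A\,\|f\|_{\mathscr{H}(K)}^p \;\le\; \int_X |f(x)|^p\,d\mu(x) \;\le\; B\,\|f\|_{\mathscr{H}(K)}^p \qquad (f \in \mathscr{H}(K)).
\end{equation*}
The right inequality shows that the inclusion $\iota\colon \mathscr{H}(K) \to L^p(X,\mu)$, $\iota(f) = f$, is well defined and bounded; the left inequality shows it is bounded below, hence injective. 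Consequently the infimum in question equals $\inf_{f} \|\iota(f) - g\|_{L^p}^p$, the $p$-th power of the $L^p$-distance from $g$ to the subspace $\iota(\mathscr{H}(K))$.

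For existence I would run the direct method. Take a minimizing sequence $(f_n)$, so $\int_X |f_n - g|^p\,d\mu \to m := \inf$. By the triangle inequality in $L^p$, $\|\iota(f_n)\|_{L^p} \le \|\iota(f_n)-g\|_{L^p} + \|g\|_{L^p}$ stays bounded, and then the lower frame bound forces $\sup_n \|f_n\|_{\mathscr{H}(K)} < \infty$. Since $\mathscr{H}(K)$ is a Hilbert space, it is reflexive, so after passing to a subsequence $f_n \rightharpoonup \hat f$ weakly in $\mathscr{H}(K)$. The crucial step is to convert weak convergence into pointwise convergence: testing against $k_x = \phi(x)$ gives $f_n(x) = \langle f_n, k_x\rangle \to \langle \hat f, k_x\rangle = \hat f(x)$ for every $x \in X$. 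Hence $|f_n - g|^p \to |\hat f - g|^p$ pointwise, and Fatou's lemma yields $\int_X |\hat f - g|^p\,d\mu \le \liminf_n \int_X |f_n - g|^p\,d\mu = m$. As $\hat f \in \mathscr{H}(K)$, it is an optimizer.

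For uniqueness when $p>1$ I would invoke the strict convexity of $L^p(X,\mu)$. If $\hat f_1, \hat f_2$ are two optimizers, then $\iota(\hat f_1)$ and $\iota(\hat f_2)$ both realize the distance $d = m^{1/p}$ from $g$. Their midpoint lies in the subspace $\iota(\mathscr{H}(K))$, so its distance to $g$ is at least $d$; but Minkowski's inequality bounds it above by $d$, forcing equality in the triangle inequality for $\iota(\hat f_1)-g$ and $\iota(\hat f_2)-g$. Strict convexity of the $L^p$ norm ($1<p<\infty$) then gives $\iota(\hat f_1) = \iota(\hat f_2)$, and injectivity of $\iota$ gives $\hat f_1 = \hat f_2$.

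I expect the main obstacle to be the case $p=1$: there $L^1$ is neither reflexive nor uniformly convex, so one cannot extract a minimizer by weak compactness in $L^1$ nor appeal to standard proximinality of closed subspaces. The resolution, and the technical heart of the argument, is that compactness is extracted not in $L^p$ but in the Hilbert space $\mathscr{H}(K)$ (via the lower frame bound together with reflexivity), while lower semicontinuity of the cost comes for free from the reproducing kernel, which upgrades weak $\mathscr{H}(K)$-convergence to everywhere pointwise convergence and thereby makes Fatou's lemma applicable uniformly for all $1 \le p < \infty$. Along the way I would verify measurability of the pointwise limit $\hat f$ and the standing integrability $g \in L^p(X,\mu)$ so that every integral is well posed.
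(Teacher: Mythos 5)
Your proposal is correct and follows essentially the same route as the paper's proof: a minimizing sequence bounded in $L^p$ via the triangle inequality, the lower frame bound to transfer boundedness to $\mathscr{H}(K)$, weak compactness plus the reproducing property to get pointwise convergence, and Fatou's lemma for existence; then equality in Minkowski's inequality (equivalently, strict convexity of $L^p$ for $p>1$) combined with the lower frame bound (injectivity of the inclusion) for uniqueness. The paper merely spells out the Minkowski equality cases explicitly where you cite strict convexity, which is the same fact.
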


\begin{proof}
Since $g \in L^p(X,\mu)$ and $f=0 \in \mathscr{H}(K)$, we have 
 \begin{equation*}
   I:= \underset{f \in \mathscr{H}(K)}{\text{inf}} \ \int_{X} \vert f(x) - g(x) \vert ^p d\mu(x) \leq \int_{X} \vert g(x) \vert ^p d\mu(x)  < + \infty ,
\end{equation*}
i.e., the problem $I$ is bounded.
Let $\{f_i\}_{i=1}^{\infty}$ be a minimizing sequence. Then
\begin{equation*}
    \int_{X} \vert f_i(x) - g(x) \vert ^p d\mu(x) \rightarrow I.
\end{equation*}
Thus, there exists a number $0<N< \infty$ such that for each $i$ \textcolor{blue}{sufficiently large}, $ \Vert f_i - g \Vert_{L^p(X,\mu)} \leq  N$.
Then
\begin{equation*}
    \Vert f_i \Vert_{L^p(X,\mu)} \leq \Vert f_i - g \Vert_{L^p(X,\mu)} + \Vert g \Vert_{L^p(X,\mu)} \leq  N + \Vert g \Vert_{L^p(X,\mu)}, \ \text{for such $i$}.
\end{equation*}
On the other hand, since $\phi: X \rightarrow \mathscr{H}(K)$ is a continuous $p$--frame for $\mathscr{H}(K)$ with respect to $\mu$ with some lower frame bound $A>0$, then we have
\begin{equation*}
    A \Vert f_i \Vert_{\mathscr{H}(K)}^p \leq \int_{X} \vert f_i(x) \vert ^p d\mu(x) = \int_{X} \vert \langle f_i, \phi(x) \rangle_{\mathscr{H}(K)} \vert ^p d\mu(x), \;\text{for any $i$.}
\end{equation*}
Combining these results for the above sufficiently large $i$, we get
\begin{equation*}
     \Vert f_i \Vert_{\mathscr{H}(K)}^p \leq \frac{1}{A} \int_{X} \vert f_i(x) \vert ^p d\mu(x)  =  \frac{ \Vert f_i \Vert_{L^p(X,\mu)}^p}{A} \leq \frac{(N + \Vert g \Vert_{L^p(X,\mu)})^p}{A} < + \infty.
\end{equation*}
Thus  $\{f_i\}_{i=1}^{\infty}$ is a bounded sequence in $\mathscr{H}(K)$. Then $\{f_i\}_{i=1}^{\infty}$ has a weakly convergent subsequence $\{f_{i_k}\}_{k=1}^{\infty}$; i.e., there exists  $\hat{f} \in \mathscr{H}(K)$ such that for any $h \in \mathscr{H}(K)$, $  \langle f_{i_k}, h \rangle \rightarrow \langle \hat{f}, h \rangle \ \text{as} \ k \rightarrow \infty.$
\textcolor{blue}{For any $x \in X$}, taking $h = \phi(x)$, we get $f_{i_k}(x) \rightarrow \hat{f}(x)$.
Now by Fatou's lemma, we get 
\begin{equation*}
     \int_{X}  \underset{k \rightarrow \infty}{\text{lim inf}} \ \vert f_{i_k}(x) - g(x) \vert ^p d\mu(x) \leq  \underset{k \rightarrow \infty}{\text{lim inf}} \int_{X}  \vert f_{i_k}(x) - g(x) \vert ^p d\mu(x).
\end{equation*}
Using the pointwise convergence and that $\{f_i\}_{i=1}^{\infty}$ is minimizing, we obtain
\begin{equation*}
     \int_{X}  \vert \hat{f}(x) - g(x) \vert ^p d\mu(x) \leq  \underset{f \in \mathscr{H}(K)}{\text{inf}} \int_{X}  \vert f(x) - g(x) \vert ^p d\mu(x).
\end{equation*}
Therefore, $\hat{f}$ is an optimizer. 
\par
Next, we show that the optimizer is unique when $p>1$.
 Let $\hat{f_1}$ and $\hat{f_2}$ be optimizers. Then by Minkowski's inequality, we have
\begin{equation*}
    \| \frac{\hat{f_1}+\hat{f_2}}{2}-g \|_{L^p(X,\mu)} \leq \|\frac{\hat{f_1}-g}{2} \|_{L^p(X,\mu)} + \|\frac{\hat{f_2}-g}{2} \|_{L^p(X,\mu)}=I^\frac{1}{p}.
\end{equation*}
Since $I$ is the infimum and $\frac{\hat{f_1}+\hat{f_2}}{2} \in \mathscr{H}(K)$, the equality must hold and we infer the following two cases:
\begin{enumerate}
    \item $\hat{f_2}-g=0$ $\mu$-a.e.
    In this case, we have $I=0$ and $\hat{f_1}=g=\hat{f_2}$ $\mu$-a.e.

    \item $\hat{f_1}-g=\lambda(\hat{f_2}-g)$ $\mu$-a.e.\@ for some number $\lambda\geq 0$. Then $ \|\hat{f_1}-g \|_{L^p(X,\mu)} = \|\hat{f_2}-g \|_{L^p(X,\mu)} = I^{\frac{1}{p}}$ implies $\lambda=1$, and hence $\hat{f_1}=\hat{f_2}$ $\mu$-a.e. 
\end{enumerate}
In either case, we conclude $\hat{f_1}=\hat{f_2}$ $\mu$-a.e. 
\end{proof}

Note that the continuous $p$-frame condition is the same as that the $L^p$ norm is equivalent to the Hilbert space norm, as in the following inequality:
    \begin{equation*}
    C_1 \Vert f \Vert_{\mathscr{H}(K)} \leq  \Vert f \Vert_{L^p(X,\mu)}
    =\left(\int_{X} \vert \langle f, \phi(x) \rangle_{\mathscr{H}(K)} \vert ^p d\mu(x)\right)^\frac{1}{p}
    \leq  C_2 \Vert f \Vert_{\mathscr{H}(K)}, 
\end{equation*}
where $C_1 = A^{\frac{1}{p}}$ and $C_2 = B^{\frac{1}{p}}$.
Thus, we can rewrite Theorem \ref{pexistance} as the following:

\begin{corollary}\label{LpSubspace}
    Let $\mathscr{H}(K)$ be an RKHS on $X$, $\mu \in \mathscr{P}(X)$, and $g \in L^p(X,\mu)$ where $1 \leq p<\infty$.
    If $\mathscr{H}(K)$ is contained in $L^p(X,\mu)$, and the norm induced by the inner product is equivalent to the $L^p$ norm,
    then the following problem 
    \begin{equation*}
    \underset{f \in \mathscr{H}(K)}{\text{inf}} \ \int_{X} \vert f(x) - g(x) \vert ^p d\mu(x)
\end{equation*}
admits an optimizer $\hat{f} \in \mathscr{H}(K)$. Furthermore, if $p>1$, the optimizer is unique.
\end{corollary}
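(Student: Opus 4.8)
The plan is to reduce Corollary \ref{LpSubspace} to Theorem \ref{pexistance} by showing that the two sets of hypotheses describe exactly the same situation; no new estimate is needed. The bridge between them is the reproducing property: for every $f \in \mathscr{H}(K)$ and every $x \in X$ we have $f(x) = \langle f, k_x \rangle_{\mathscr{H}(K)} = \langle f, \phi(x) \rangle_{\mathscr{H}(K)}$. Substituting this into the $L^p$ norm gives, for any $f$ lying in $L^p(X,\mu)$,
\[
\|f\|_{L^p(X,\mu)}^p = \int_X |f(x)|^p \, d\mu(x) = \int_X |\langle f, \phi(x) \rangle_{\mathscr{H}(K)}|^p \, d\mu(x),
\]
which is precisely the integral appearing in the continuous $p$-frame inequality.

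First I would verify the forward direction. Assume $\mathscr{H}(K)$ is a closed subspace of $L^p(X,\mu)$ on which the Hilbert space norm and the $L^p$ norm are equivalent, say $C_1 \|f\|_{\mathscr{H}(K)} \leq \|f\|_{L^p(X,\mu)} \leq C_2 \|f\|_{\mathscr{H}(K)}$ for some $0 < C_1 \leq C_2 < \infty$. Raising to the $p$-th power and using the displayed identity yields $C_1^p \|f\|^p_{\mathscr{H}(K)} \leq \int_X |\langle f, \phi(x) \rangle|^p \, d\mu \leq C_2^p \|f\|^p_{\mathscr{H}(K)}$ for all $f \in \mathscr{H}(K)$. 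Setting $A = C_1^p$ and $B = C_2^p$ shows that $\{\phi(x)\}_{x \in X}$ is a continuous $p$-frame for $\mathscr{H}(K)$ with respect to $\mu$. Since by assumption $g \in L^p(X,\mu)$ and $1 \leq p < \infty$, all the hypotheses of Theorem \ref{pexistance} are met, so the existence of an optimizer $\hat{f}$, together with its uniqueness when $p > 1$, follows at once.

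For completeness I would also record the converse implication, to confirm that the equivalence is genuine rather than one-directional: if $\{\phi(x)\}_{x \in X}$ is a continuous $p$-frame, then the upper bound gives $\|f\|_{L^p(X,\mu)} \leq B^{1/p} \|f\|_{\mathscr{H}(K)} < \infty$, so every $f \in \mathscr{H}(K)$ lies in $L^p(X,\mu)$ and the inclusion $\mathscr{H}(K) \hookrightarrow L^p(X,\mu)$ is bounded; the lower bound makes this inclusion bounded below, hence a topological embedding. Because $\mathscr{H}(K)$ is complete and the two norms are equivalent, the image is complete and therefore a closed subspace of $L^p(X,\mu)$, recovering exactly the hypotheses of the corollary.

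The argument is thus essentially bookkeeping. The only point requiring a little care — and the closest thing to an obstacle — is the identification of elements of $\mathscr{H}(K)$, which are honest pointwise-defined functions, with their equivalence classes in $L^p(X,\mu)$, and the verification that the norm equivalence converts the abstract integrand $|\langle f, \phi(x) \rangle|^p$ into the concrete integrand $|f(x)|^p$ through the reproducing property. Once this identification is in place, Corollary \ref{LpSubspace} is an immediate translation of Theorem \ref{pexistance}.
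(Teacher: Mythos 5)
Your proposal is correct and matches the paper's own reasoning: the paper presents this corollary as an immediate rewriting of Theorem \ref{pexistance}, noting (just as you do via the reproducing property $f(x) = \langle f, \phi(x)\rangle_{\mathscr{H}(K)}$) that the norm-equivalence hypothesis is exactly the continuous $p$-frame condition with bounds $A = C_1^p$, $B = C_2^p$. Your additional verification of the converse direction is a nice bit of completeness but not needed for the corollary itself.
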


In the special case where $p=2$ and $\mathscr{H}(K)$ is a \textit{Hilbert} subspace of $L^2(X,\mu)$, such a unique closest vector is classically given by the orthogonal projection of $g$ onto $\mathscr{H}(K)$.
Although $L^p(X,\mu)$ is not a Hilbert space for general $p\neq 2$, our corollary (under some assumptions) provides such a unique optimizer in the probabilistic approximation sense, which can be viewed as ``projection'' onto the given subspace.

Our next result is about Problem \ref{normConstraint}. We establish the existence of the optimizer in Proposition \ref{constrain:muPenaltyThm} when the target function $g: X \to \mathbb{F}$ and the cost function $c:\mathbb{F} \times \mathbb{F} \rightarrow \mathbb{R}^{+}$ satisfy the following assumption.
\begin{assumption}\label{assumption: g and c}
    Let the target function $g:X\to \mathbb{F}$ and the cost function $c:\mathbb{F} \times \mathbb{F} \rightarrow \mathbb{R}^{+}$ be such that 
$c(0, g(\cdot)) \in L^1(X,\mu)$, and for any given $z \in \mathbb{F}$, $c(\cdot,z)$ is lower semicontinuous.
\end{assumption}

\begin{proposition}\label{constrain:muPenaltyThm}
Let $\mathscr{H}(K)$ be an RKHS on $X$, $\mu \in \mathscr{P}(X)$ and $M>0$.
Suppose the target function $g:X\to \mathbb{F}$ and the cost $c:\mathbb{F} \times \mathbb{F} \rightarrow \mathbb{R}^{+}$ satisfy Assumption \ref{assumption: g and c}.
Then the following problem
\begin{equation*}
    \underset{f \in \mathscr{H}(K)}{\text{inf}} \ \int_{X} c(f(x),g(x))  d\mu(x) \  \text{such that} \ \|f\|_{\mathscr{H}(K)} \leq M
\end{equation*}
admits an optimizer $\hat{f} \in \mathscr{H}(K)$. 
Furthermore, 
when $c(f(x),g(x)) = |f(x)-g(x)|^p$ with $p>1$, the optimizer is unique. 
\end{proposition}

The proof of Proposition \ref{constrain:muPenaltyThm} follows from the argument of Theorem \ref{pexistance}. The key idea is to show that the minimizing sequence $\{f_i\}_{i=1}^{\infty}$ is bounded and thus has a weakly convergent subsequence. Then the weak limit is an optimizer. However, we do not need the continuous $p$-frame condition in Theorem \ref{pexistance}, since we already have the norm constraint: for each $i$, $\|f_i\|_{\mathscr{H}(K)} \leq M$.

\begin{proof}
Since  $c(0, g(\cdot)) \in L^1(X,\mu)$ and $f=0 \in \mathscr{H}(K)$ with $\|f\|_{\mathscr{H}(K)} \leq M$, then 
 \begin{equation*}
   I_c:= \underset{f \in \mathscr{H}(K)}{\text{inf}} \ \int_{X} c( f(x), g(x))  d\mu(x) \leq \int_{X} c(0, g(x)) d\mu(x)  < + \infty.
\end{equation*}
Hence, the problem $I_c$ is bounded. Let $\{f_i\}_{i=1}^{\infty}$ be a minimizing sequence such that for each $i$, $\|f_i\|_{\mathscr{H}(K)} \leq M$ and
\begin{equation*}
     \int_{X} c( f_i(x), g(x))  d\mu(x)  \rightarrow I_c < + \infty.
\end{equation*}
Since $\{f_i\}_{i=1}^{\infty}$ is a bounded sequence in $\mathscr{H}(K)$, $\{f_i\}_{i=1}^{\infty}$ has a weakly convergent subsequence $\{f_{i_k}\}_{k=1}^{\infty}$ with weak limit $\hat{f} \in \mathscr{H}(K)$. Then for each $x \in X$, $f_{i_k}(x)= \langle f_{i_k}, \phi(x) \rangle \rightarrow \hat{f}(x)= \langle \hat{f}, \phi(x) \rangle $.
By the lower semicontinuity of the cost function $c$ and Fatou's lemma, we get 
\begin{equation*}
\begin{split}
    \int_{X}  c( \hat{f}(x), g(x)) d\mu(x) &\leq \int_{X}  \underset{k \rightarrow \infty}{\text{lim inf}} \ c( \hat{f}_{i_k}(x), g(x)) d\mu(x) \\
    &\leq  \underset{k \rightarrow \infty}{\text{lim inf}} \int_{X}  c( \hat{f}_{i_k}(x), g(x)) d\mu(x) =I_c,
\end{split}
\end{equation*}
where the last equality follows from that $\{f_i\}_{i=1}^{\infty}$ is a minimizing sequence for $I_c$.
On the other hand, since $f_{i_k}$ converges to $\hat{f}$ weakly in $\mathscr{H}(K)$, we have $\Vert \hat{f} \Vert_{\mathscr{H}(K)} \leq  \underset{k \rightarrow \infty}{\text{lim inf}}  \ \Vert f_{i_k} \Vert_{\mathscr{H}(K)} \leq M.$
Therefore, $\hat{f}$ is an optimizer.

Let us show the uniqueness when $c(f(x),g(x)) = |f(x)-g(x)|^p$ with $p>1$. 
Suppose $\hat{f_1}$ and $\hat{f_2}$ are optimizers attaining $I_c$. 
Note that $\frac{\hat{f_1}+\hat{f_2}}{2} \in \mathscr{H}(K)$ and $\|\frac{\hat{f_1}+\hat{f_2}}{2}\|_{\mathscr{H}(K)} \leq M$.
Since $c(\cdot,z) = |\cdot - z|^p$ is convex for any given $z$, then
\begin{equation*}
\begin{split}
     \int_{X}&  |\frac{\hat{f_1}(x)+\hat{f_2}(x)}{2}- g(x)|^p d\mu(x)
     \leq \frac{\int_{X} |\hat{f_1}(x)- g(x)|^p d\mu(x)}{2} +  \frac{\int_{X}  |\hat{f_2}(x)- g(x)|^p d\mu(x)}{2} =I_c. 
\end{split}
\end{equation*}
 Thus, $\frac{\hat{f_1}+\hat{f_2}}{2}$ is also an optimizer. Then the equality in the Minkowski inequality about $\| \frac{\hat{f_1}+\hat{f_2}}{2}-g \|_{L^p(X,\mu)}$ must hold, and using the same argument from the uniqueness proof of Theorem \ref{pexistance}, we infer $\hat{f_1}=\hat{f_2}$ $\mu$-a.e.
\qedhere
\end{proof}

Our last result in this section involves Problem \ref{muPenalty} by adding an extra regularization term of Tikhonov type to the original minimization problem:
\begin{equation*}
    \underset{f \in \mathscr{H}(K)}{\text{inf}} \ \int_{X} c(f(x),g(x))  d\mu(x)+ \lambda \Vert f \Vert_{\mathscr{H}(K)}^p,
\end{equation*}
where $p>0$ and $\lambda>0$. We show the existence and uniqueness of the optimizer in the following theorem. The key idea is also to show that the minimizing sequence $\{f_i\}_{i=1}^{\infty}$ is bounded, which is guaranteed by the regularization.  
\begin{theorem}\label{muPenaltyThm}
Let $\mathscr{H}(K)$ be an RKHS on $X$, $\mu \in \mathscr{P}(X)$ and $0 < p <\infty$. Suppose the target function $g:X\to \mathbb{F}$ and the cost $c:\mathbb{F} \times \mathbb{F} \rightarrow \mathbb{R}^{+}$ satisfy Assumption \ref{assumption: g and c}.
Then Problem \ref{muPenalty}
admits an optimizer $\hat{f} \in \mathscr{H}(K)$. 
Furthermore,  when $p> 1$ and $c(\cdot,z)$ is convex for any given $z \in \mathbb{F}$, the optimizer is unique. 
\end{theorem}
\begin{proof}
Since  $c(0, g(\cdot)) \in L^1(X,\mu)$ and $f=0 \in \mathscr{H}(K)$, then 
 \begin{equation*}
   I_g:= \underset{f \in \mathscr{H}(K)}{\text{inf}} \ \int_{X} c( f(x), g(x))  d\mu(x) + \lambda \Vert f \Vert^p_{\mathscr{H}(K)} \leq \int_{X} c(0, g(x)) d\mu(x)  < + \infty.
\end{equation*}
Hence, the problem $I_g$ is bounded. Let $\{f_i\}_{i=1}^{\infty}$ be a minimizing sequence. Then
\begin{equation*}
     \int_{X} c( f_i(x), g(x))  d\mu(x) + \lambda \Vert f_i \Vert^p_{\mathscr{H}(K)} \rightarrow I_g < + \infty.
\end{equation*}
Then  there exists $0<T<\infty$ such that for each $i$ \textcolor{blue}{sufficiently large}, 
\begin{equation*}
    \int_{X} c( f_i(x), g(x))  d\mu(x) + \lambda \Vert f_i \Vert^p_{\mathscr{H}(K)}  \leq  T.
\end{equation*}
Thus  $\{f_i\}_{i=1}^{\infty}$ is a bounded sequence in $\mathscr{H}(K)$. 
Using the same argument from the proof of Proposition \ref{constrain:muPenaltyThm}, we have that $\{f_i\}_{i=1}^{\infty}$ has a weakly convergent subsequence $\{f_{i_k}\}_{k=1}^{\infty}$ with weak limit $\hat{f} \in \mathscr{H}(K)$ and
\begin{equation*}
    \int_{X}  c( \hat{f}(x), g(x)) d\mu(x) \leq  \underset{k \rightarrow \infty}{\text{lim inf}} \int_{X}  c( \hat{f}_{i_k}(x), g(x)) d\mu(x).
\end{equation*}
On the other hand, since $f_{i_k}$ converges to $\hat{f}$ weakly in $\mathscr{H}(K)$, we have $\Vert \hat{f} \Vert_{\mathscr{H}(K)}^p \leq  \underset{k \rightarrow \infty}{\text{lim inf}}  \ \Vert f_{i_k} \Vert_{\mathscr{H}(K)}^p.$
Furthermore, by the superadditivity of the limit inferior, we get 
\begin{equation*}
\begin{split}
      \underset{k \rightarrow \infty}{\text{lim inf}} \int_{X}  c( \hat{f}_{i_k}(x), g(x)) &d\mu(x) + \lambda \ \underset{k \rightarrow \infty}{\text{lim inf}}  \ \Vert f_{i_k} \Vert^p_{\mathscr{H}(K)}  \\
      &\leq \underset{k \rightarrow \infty}{\text{lim inf}} \int_{X} c( f_{i_k}(x), g(x))  d\mu(x) + \lambda \Vert f_{i_k} \Vert^p_{\mathscr{H}(K)}=I_g,
\end{split}
\end{equation*}
where the last equality follows from that $\{f_i\}_{i=1}^{\infty}$ is minimizing. 
Combining the above results, we obtain
\begin{equation*}
     \int_{X}  c (\hat{f}(x), g(x)) d\mu(x) + \lambda \Vert \hat{f} \Vert^p_{\mathscr{H}(K)} \leq  I_g.
\end{equation*}
Therefore, $\hat{f}$ is an optimizer.
\par

Next, we show the uniqueness when $p > 1$ and $c(\cdot,z)$ is convex for any fixed $z \in \mathbb{F}$.
Let $\hat{f_1}$ and $\hat{f_2}$ be optimizers attaining $I_g$. 
Since $\frac{\hat{f_1}+\hat{f_2}}{2} \in \mathscr{H}(K)$, we have 
\begin{equation*}
\begin{split}
     \int_{X}&  c (\frac{\hat{f_1}(x)+\hat{f_2}(x)}{2}, g(x)) d\mu(x)  +  \lambda \Vert \frac{\hat{f_1}+\hat{f_2}}{2} \Vert^p_{\mathscr{H}(K)}
     \geq I_g\\
     &=\frac{\lambda \Vert \hat{f_1} \Vert^p_{\mathscr{H}(K)}}{2} +  \frac{\lambda \Vert \hat{f_2} \Vert^p_{\mathscr{H}(K)}}{2} +  \frac{\int_{X} c (\hat{f_1}(x), g(x)) d\mu(x)}{2} +  \frac{\int_{X} c (\hat{f_2}(x), g(x)) d\mu(x)}{2}. 
\end{split}
\end{equation*}
Since $c(\cdot,z)$ is convex for any given $z$, we then have
\begin{equation*}
     \Vert \frac{\hat{f_1}+\hat{f_2}}{2} \Vert^p_{\mathscr{H}(K)}
     \geq \frac{\Vert \hat{f_1} \Vert^p_{\mathscr{H}(K)}}{2} +  \frac{\Vert \hat{f_2} \Vert^p_{\mathscr{H}(K)}}{2}.
\end{equation*}
On the other hand, by triangle inequality and that the map $x \mapsto |x|^p$ is (strictly) convex for $p> 1$, we get
\begin{equation*}
    \left\Vert \frac{\hat{f_1}+\hat{f_2}}{2} \right \Vert^p_{\mathscr{H}(K)}
    \leq \left(\frac{\Vert \hat{f_1} \Vert_{\mathscr{H}(K)}}{2} +  \frac{\Vert \hat{f_2} \Vert_{\mathscr{H}(K)}}{2}\right)^p
    \leq \frac{\Vert \hat{f_1} \Vert^p_{\mathscr{H}(K)}}{2} +  \frac{\Vert \hat{f_2} \Vert^p_{\mathscr{H}(K)}}{2}.
\end{equation*}
Hence we see that the equalities above must hold, and we infer $\hat{f_1}=c\hat{f_2}$ for some $c\geq 0$ as well as $\Vert \hat{f_1} \Vert_{\mathscr{H}(K)}=\Vert \hat{f_2} \Vert_{\mathscr{H}(K)}$ 
(the case $\hat{f_1}=0$ implies $\hat{f_2}=0$, and vice versa).
Therefore $c=1$ and  $\hat{f_1}=\hat{f_2}$.
\qedhere
\end{proof}


\section{Probabilistic Representer Theorem}\label{section4}

As witnessed by the work of Wahba \cite{wahba1990spline} and followed by Schölkopf, Herbrich, and Smola \cite{scholkopf2001generalized}, the celebrated representer theorem (Theorem \ref{Representer} below) states that the optimizer in an RKHS that minimizes the regularized loss function is in the linear span of the kernel function at the given points.
\begin{theorem}[Theorem 8.7 and 8.8 in \cite{paulsen2016introduction}]\label{Representer}
     Let $\mathscr{H}(K)$ be an RKHS on $X$,
    $F= \{x_1,\cdots, x_N\}$ a finite set of distinct points in $X$, and $\{c_1, \cdots, c_N\} \subset \mathbb{F}$. Let $L: \mathbb{F}^N \rightarrow \mathbb{R}^+$ be a convex function representing the loss and consider the minimizing problem 
    $$\underset{f \in \mathscr{H}(K)}{\inf} L(f(x_1), \cdots, f(x_N)) + \Vert f \Vert_{\mathscr{H}(K)}^2.$$
    Then the optimizer to this problem exists and is unique.
    Furthermore, the optimizer is in the linear span of the functions $k_{x_1}, \cdots, k_{x_N}$.
\end{theorem}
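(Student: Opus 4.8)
The plan is to exploit the orthogonal decomposition of $\mathscr{H}(K)$ relative to the finite-dimensional subspace spanned by the kernel sections at the sample points. First I would set $M = \operatorname{span}\{k_{x_1}, \dots, k_{x_N}\}$, which is closed because it is finite-dimensional, and write any $f \in \mathscr{H}(K)$ as $f = f_0 + f_\perp$ with $f_0 \in M$ and $f_\perp \in M^\perp$. The key observation is that the loss term depends only on $f_0$: by the reproducing property and orthogonality,
\begin{equation*}
    f(x_i) = \langle f, k_{x_i}\rangle = \langle f_0, k_{x_i}\rangle + \langle f_\perp, k_{x_i}\rangle = f_0(x_i),
\end{equation*}
for each $i$, since $k_{x_i} \in M$ forces $\langle f_\perp, k_{x_i}\rangle = 0$. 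Hence $L(f(x_1), \dots, f(x_N)) = L(f_0(x_1), \dots, f_0(x_N))$.

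Next I would invoke the Pythagorean identity $\|f\|^2_{\mathscr{H}(K)} = \|f_0\|^2_{\mathscr{H}(K)} + \|f_\perp\|^2_{\mathscr{H}(K)}$ to compare the objective at $f$ with its value at the projection $f_0$. Since the loss term is unchanged while the regularizer satisfies $\|f\|^2_{\mathscr{H}(K)} \geq \|f_0\|^2_{\mathscr{H}(K)}$ with equality if and only if $f_\perp = 0$, the objective strictly decreases under projection onto $M$ unless $f$ already lies in $M$. This simultaneously shows that the infimum over $\mathscr{H}(K)$ equals the infimum over $M$ and that every minimizer must lie in $M$, which is precisely the representer conclusion.

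It then remains to establish existence and uniqueness of a minimizer of the reduced objective $\Phi(f_0) = L(f_0(x_1), \dots, f_0(x_N)) + \|f_0\|^2_{\mathscr{H}(K)}$ over the finite-dimensional space $M$. The map $f_0 \mapsto (f_0(x_1), \dots, f_0(x_N))$ is linear, so composing with the convex $L$ keeps the loss term convex, while $f_0 \mapsto \|f_0\|^2_{\mathscr{H}(K)}$ is strictly convex; thus $\Phi$ is strictly convex on $M$. For coercivity I would use that a finite-valued convex $L$ admits an affine minorant, so the loss decays at most linearly in $\|f_0\|_{\mathscr{H}(K)}$ while the quadratic regularizer dominates, forcing $\Phi(f_0) \to \infty$ as $\|f_0\|_{\mathscr{H}(K)} \to \infty$. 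A strictly convex, coercive, lower-semicontinuous function on a finite-dimensional space attains its infimum at a unique point, yielding a unique $\hat f \in M$; writing $\hat f = \sum_{j} \alpha_j k_{x_j}$ gives the stated form, and since every minimizer over $\mathscr{H}(K)$ lies in $M$, this $\hat f$ is the unique global optimizer.

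The main obstacle I anticipate is the existence half rather than the representer form, which is nearly immediate from the projection argument. Specifically, one must confirm coercivity and lower-semicontinuity of $\Phi$ using only convexity of $L$: the point to check carefully is that the quadratic penalty genuinely dominates the worst-case linear decay of any finite convex $L$, and that finite-valued convexity on $\mathbb{F}^N$ (viewed as a real Euclidean space) already supplies continuity. If $L$ were permitted to take the value $+\infty$, one would additionally need a properness or domain assumption to ensure the infimum over $M$ is finite and attained.
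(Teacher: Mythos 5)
Your proof is correct, but it is worth noting that the paper never proves this statement itself --- it is quoted from Paulsen--Raghupathi (Theorems 8.7 and 8.8) as background --- so the natural comparison is with the paper's proofs of its own generalizations, namely Theorem \ref{muPenaltyThm} together with Theorem \ref{GRepresenter1} and Corollary \ref{GRepresenter2}, which recover this statement when $\mu$ is finitely supported. There the logical order is reversed: the paper first proves existence in the full space $\mathscr{H}(K)$ by the direct method (a minimizing sequence is norm-bounded thanks to the penalty term, one extracts a weak limit, and lower semicontinuity plus Fatou's lemma show the limit is optimal), then proves uniqueness by a convexity/parallelogram argument, and only afterwards obtains the representer form by projecting the already-known unique optimizer $\hat f$ onto the closed span $\Omega$ of the kernel functions: $P_\Omega \hat f$ has the same point values and no larger norm, hence is also an optimizer, so uniqueness forces $\hat f = P_\Omega \hat f \in \Omega$. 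Your route instead projects \emph{first}, using the strict decrease of the objective under projection to show that all minimizers lie in $M = \operatorname{span}\{k_{x_1},\dots,k_{x_N}\}$ without needing uniqueness as an input, and then settles existence and uniqueness inside $M$ by coercivity (via an affine minorant of $L$) and strict convexity in finite dimensions. Your approach is more elementary --- no weak compactness is needed, everything reduces to a problem on $\mathbb{F}^N$ --- and it cleanly isolates where each hypothesis is used; its limitation, and the reason the paper argues in the other order, is that it does not extend to the probabilistic setting with infinitely supported $\mu$, where the relevant subspace $\Omega$ is infinite-dimensional, the reduction no longer trivializes existence, and one genuinely needs the direct method in the full space before the projection step. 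Your closing caveat about $L$ taking the value $+\infty$ is apt; the finite-valuedness of $L$ is exactly what supplies continuity and the affine minorant in your coercivity argument.
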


The representer theorem is useful in practice since it turns the minimization problem into a finite-dimensional optimization problem. Recent advances have extended this framework into broader settings, including vector-valued RKHS and other regularization terms that are not Tikhonov-type. For example, M. Belkin, P. Niyogi, and V. Sindhwani in \cite{belkin2006manifold} proposed a representer theorem by adding the manifold regularization term in the minimization problem. C. A. Michelli and M. Pontil
 in \cite{micchelli2005learning} gave a representation theorem for reproducing kernel Hilbert spaces of $\mathbb{R}^n$-valued functions, which was then used for the multi-task learning \cite{evgeniou2005learning}. 
H. Kadri et al. in \cite{kadri2016operator} proved an analog of the representer theorem for the function-valued RKHS (a.k.a RKHS of operators) under the functional regression problem setting. 
H. Q. Minh, L. Bazzani, and V. Murino in \cite{minh2016unifying} showed a representer theorem for a minimization problem in function-valued RKHSs, which
includes vector-valued manifold regularization and multi-view learning as special cases, and A. Gheondea and C. Tilki in \cite{gheondea2024localisation} further proved some representer theorems for a localized version of the minimizing problem in \cite{minh2016unifying}.

The fact that the optimizer $\hat{f}$ is in the linear span of the functions $k_{x_1}, \cdots, k_{x_N}$ implies that there exist $\omega_1, \cdots, \omega_N \in \mathbb{F}$ such that 
$$\hat{f} = \sum_{i=1}^N \omega_i k_{x_i} = \sum_{i=1}^N \omega_i \phi(x_i), $$
where $\phi:X \rightarrow \mathscr{H}(K)$ is the feature map. Using the idea of transforming the sum into an integration with respect to some $\mathbb{F}$-measure $\nu_f:= \sum\limits_{i=1}^N \omega_i \delta_{x_i}$, we have
$$\hat{f} = \int_{X} \phi(x) d\nu_f(x), $$
which indicates that the optimizer has a measure representation in the representer theorem.
Therefore, we would like to know whether there is a corresponding version of the representer theorem under the probabilistic approximation setting that we introduced in previous sections: if $\hat{f} \in \mathscr{H}(K)$ is the optimizer in Theorem \ref{muPenaltyThm}, there may exist an $\mathbb{F}$-measure $\nu$ with $\supp(\nu) \subset \supp(\mu)$ such that 
\begin{equation*}
    \hat{f} = \int_{X} \phi(x) d\nu(x).
\end{equation*} 
However, the above integral is RKHS-valued, and we need to make it well-defined. Indeed, if $\nu$ is a probability measure on $X$ and $\mathscr{H}(K)$ is separable, the integral $ \int_{X} \phi(x) d\nu(x)$ is well-defined with mild assumptions:  Proposition 4 in \cite{aydin2021probability} showed that if $\int_X \|\phi(x)\|_{\mathscr{H}(K)}^2d\nu(x)<\infty  $, then $ \int_{X} \phi(x) d\nu(x)$ is a Bochner integral, and Theorem 3.11 in \cite{kukush2020gaussian} showed that $ \int_{X} \phi(x) d\nu(x)$ is a Bochner integral if and only if $\int_X \|\phi(x)\|_{\mathscr{H}(K)}d\nu(x)<\infty $. These conditions naturally hold if the kernel $K: X \times X \rightarrow \mathbb{F}$ is normalized ($\|\phi(x)\|_{\mathscr{H}(K)} =1$).  
Moreover, $\int_{X} \phi(x) d\nu(x)$ is called the \emph{kernel mean embedding} of $\nu$ in $\mathscr{H}(K)$, which leads to the \emph{maximum mean discrepancy} (MMD) metric on the set of probability measures on $X$ if the kernel $K$ is characteristic: 
$$\text{MMD}(\nu_1, \nu_2) = \left\| \int_X \phi(x) d\nu_1(x) -\int_X \phi(x) d\nu_2(x) \right\|_{\mathscr{H}(K)},$$
where $\nu_1$ and $\nu_2$ are probability measures on $X$. See    \cite{muandet2017kernel, song2008learning} for more details. 

In this work, since $\nu$ is an $\mathbb{F}$-measure, the integral $ \int_{X} \phi(x) d\nu(x)$ is defined as the Pettis integral: for any $f \in \mathscr{H}(K)$,  
     \begin{align*}
            \langle \int_{X} \phi(x) d\nu(x), f \rangle_{\mathscr{H}(K)} 
    =\int_{X} \langle \phi(x), f \rangle_{\mathscr{H}(K)} d\nu(x). 
    \end{align*}
By Cauchy–Schwarz Inequality, the above integral is well-defined as long as 
     $$\int_X \|\phi(x)\|_{\mathscr{H}(K)} d|\nu|(x) < +\infty, $$
where $\vert \nu \vert$ is the variation measure of $\nu$.
We confirm our speculation in the following representer theorem by showing that the optimizer is the limit of a sequence of kernel embeddings of some $\mathbb{F}$-measures.

\begin{theorem}[Probabilistic Representer]\label{GRepresenter1}
 Let $\hat{f}$ be the unique optimizer in Theorem \ref{muPenaltyThm}. 
 Assume the probability measure $\mu$ satisfies that for any $\mathbb{F}$-measure $\xi$ on $X$ with $\supp (\xi) \subset \supp(\mu)$, the following holds: 
\begin{equation}\label{cond:bddness_representer_thm}
    \int_X \|\phi(x)\|_{\mathscr{H}(K)}d|\xi|(x)<+\infty.
\end{equation}
Then there exists a sequence of $\mathbb{F}$-measures $\{\nu_n\}_{n=1}^{\infty}$ on $X$ with  $\supp(\nu_n) \subset \supp(\mu)$ such that 
\begin{equation*}
    \hat{f} = \underset{n \rightarrow \infty}{\text{lim}}\int_{X} \phi(x) d\nu_n(x).
\end{equation*} 
Furthermore, if $\mu$ is finitely supported, or $\mathscr{H}(K)$ is finite-dimensional, then there exists an $\mathbb{F}$-measure $\nu$ on $X$ with  $\supp(\nu) \subset \supp(\mu)$ such that 
\begin{equation*}
    \hat{f} = \int_{X} \phi(x) d\nu(x).
\end{equation*} 
\end{theorem}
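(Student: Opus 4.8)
The plan is to reduce the problem to a statement about the closed subspace generated by the feature vectors over the support of $\mu$, and then to realize elements of that subspace as vector integrals against discrete $\mathbb{F}$-measures. Set $\mathscr{M}_\mu := \overline{\operatorname{span}}\{\phi(x) : x \in \supp\mu\}$, with the closure taken in the norm of $\mathscr{H}(K)$, and use the orthogonal decomposition $\mathscr{H}(K) = \mathscr{M}_\mu \oplus \mathscr{M}_\mu^{\perp}$.

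First I would show that the optimizer $\hat{f}$ lies in $\mathscr{M}_\mu$. Given any $f \in \mathscr{H}(K)$, decompose $f = f_\parallel + f_\perp$ with $f_\parallel \in \mathscr{M}_\mu$ and $f_\perp \in \mathscr{M}_\mu^{\perp}$. For every $x \in \supp\mu$ we have $\phi(x) \in \mathscr{M}_\mu$, so $f_\perp(x) = \langle f_\perp, \phi(x)\rangle_{\mathscr{H}(K)} = 0$ and hence $f(x) = f_\parallel(x)$; since $\mu$ is carried by $\supp\mu$, the loss term $\int_X c(f(x),g(x))\,d\mu(x)$ depends only on $f_\parallel$. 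Meanwhile, by the Pythagorean identity, $\|f\|_{\mathscr{H}(K)}^p = (\|f_\parallel\|_{\mathscr{H}(K)}^2 + \|f_\perp\|_{\mathscr{H}(K)}^2)^{p/2} \geq \|f_\parallel\|_{\mathscr{H}(K)}^p$ for every $p>0$, with equality if and only if $f_\perp = 0$, because $t\mapsto t^{p/2}$ is strictly increasing. Therefore the objective at $f_\parallel$ never exceeds that at $f$, and is strictly smaller unless $f_\perp = 0$; applying this to $\hat{f}$ forces $\hat{f} \in \mathscr{M}_\mu$.

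Next I would produce the approximating sequence. By definition of $\mathscr{M}_\mu$, there exist finite linear combinations $s_n = \sum_{i} \alpha_i^{(n)} \phi(x_i^{(n)})$ with $x_i^{(n)} \in \supp\mu$ and $s_n \to \hat{f}$ in $\mathscr{H}(K)$. Setting $\nu_n := \sum_i \alpha_i^{(n)} \delta_{x_i^{(n)}}$, each $\nu_n$ is a finitely supported $\mathbb{F}$-measure with $\supp\nu_n \subset \supp\mu$, and hypothesis \eqref{cond:bddness_representer_thm} guarantees that the vector integral $\int_X \phi\,d\nu_n$ is well defined and coincides with the finite sum $s_n$; thus $\hat{f} = \lim_{n\to\infty}\int_X \phi\,d\nu_n$, which is the first assertion. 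For the second assertion I would use that $\mathscr{M}_\mu$ is finite-dimensional in both listed cases: if $\mu$ is finitely supported then $\{\phi(x):x\in\supp\mu\}$ is already a finite spanning family, and if $\mathscr{H}(K)$ is finite-dimensional then every subspace is. A finite-dimensional subspace is closed, so $\mathscr{M}_\mu$ equals the (unclosed) span, and one can extract from $\{\phi(x):x\in\supp\mu\}$ a basis $\phi(x_1),\dots,\phi(x_d)$ with $x_j \in \supp\mu$. Writing $\hat{f} = \sum_{j=1}^d \alpha_j \phi(x_j)$ and putting $\nu := \sum_{j=1}^d \alpha_j \delta_{x_j}$ produces a single $\mathbb{F}$-measure with $\supp\nu \subset \supp\mu$ and $\hat{f} = \int_X \phi\,d\nu$.

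The main obstacle I anticipate is the reduction step: one must verify carefully that the loss functional is genuinely insensitive to the component $f_\perp$, which hinges on the fact that the integral sees only the values of $f$ on $\supp\mu$ (up to $\mu$-null sets) together with the reproducing identity $f_\perp(x)=\langle f_\perp,\phi(x)\rangle$. A secondary subtlety is the precise role of \eqref{cond:bddness_representer_thm}, which is what makes the vector-valued integral $\int_X \phi\,d\nu$ well defined as an element of $\mathscr{H}(K)$ beyond the purely discrete case, and which must be invoked to legitimately identify $\int_X \phi\,d\nu_n$ with $s_n$.
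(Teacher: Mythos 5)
Your argument is correct, and its skeleton---force $\hat f$ into a closed subspace built from $\{\phi(x):x\in\supp\mu\}$ by showing that projection onto it leaves the loss term unchanged (reproducing property) while only decreasing the norm term---is the same as the paper's; but the implementation differs in two ways that are worth recording. The paper takes the subspace $\Omega$ to be the closure of the set $\mathscr{A}$ of \emph{all} vector integrals $\int_X\phi\,d\xi$ over $\mathbb{F}$-measures $\xi$ supported in $\supp\mu$ (this is where condition~\eqref{cond:bddness_representer_thm} is genuinely needed, to make those integrals well defined), and it concludes $P_\Omega\hat f=\hat f$ by observing that $P_\Omega\hat f$ is also an optimizer and then invoking the \emph{uniqueness} from Theorem~\ref{muPenaltyThm}. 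You instead take $\mathscr{M}_\mu=\overline{\operatorname{span}}\{\phi(x):x\in\supp\mu\}$---the subspace the paper uses for Corollary~\ref{GRepresenter2}---and conclude membership by a strictness argument (Pythagoras plus strict monotonicity of $t\mapsto t^{p/2}$), which needs only optimality of $\hat f$, not uniqueness. Your route buys a slightly stronger conclusion: the approximating measures $\nu_n$ come out finitely supported, so you have in effect proved the discrete representer (Corollary~\ref{GRepresenter2}) and deduced the theorem from it; it also makes transparent that \eqref{cond:bddness_representer_thm} plays no role in the sequential statement and is only needed to interpret integrals against non-discrete measures---the formulation that matters for the paper's Conjecture~\ref{conj:representer} and Example~\ref{ex:hardy}, where the representing measure is genuinely non-atomic. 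Two minor points of hygiene: for your finitely supported $\nu_n$, the identity $\int_X\phi\,d\nu_n=s_n$ is automatic from the duality definition of the integral and does not require \eqref{cond:bddness_representer_thm}; and both your argument and the paper's implicitly use $\mu(X\setminus\supp\mu)=0$, which is legitimate here because $X$ is assumed Polish.
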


The finiteness condition \ref{cond:bddness_representer_thm} in Theorem \ref{GRepresenter1} holds when $\mu$ is finitely supported, or when $X$ is compact and the kernel $K: X \times X \rightarrow \mathbb{F}$ is continuous. The condition \ref{cond:bddness_representer_thm} also holds when $K$ is normalized: for any $x \in X$, $K(x, x) = \|\phi(x)\|_{\mathscr{H}(K)}^2 =1$.
Furthermore, the condition \ref{cond:bddness_representer_thm} holds on the Hardy space $H^2(D)$ when $\supp \mu$ is a compact subset of the open unit disk $D$, as we will see in Example \ref{ex:hardy} later.

\begin{proof}
Define
    \begin{equation*}
        \mathscr{A}:=\left\{\int_{X} \phi(x) d\xi(x): \xi \ \text{is an $\mathbb{F}$-measure on $X$}, \supp(\xi) \subset \supp(\mu) \right \},
    \end{equation*}
    and let $\Omega$ be the closure of $\mathscr{A}$ in $\mathscr{H}(K)$. Here, the RKHS-valued integral above is defined as the Pettis integral: for any $f \in \mathscr{H}(K)$,  
     \begin{align*}
            \langle \int_{X} \phi(x) d\xi(x), f \rangle_{\mathscr{H}(K)} 
    =\int_{X} \langle \phi(x), f \rangle_{\mathscr{H}(K)} d\xi(x). 
    \end{align*}
    By the Assumption \ref{cond:bddness_representer_thm}, the above integral is well-defined, since
    \begin{align*}
            \left|\langle \int_{X} \phi(x) d\xi(x), f \rangle_{\mathscr{H}(K)} \right|
    &=\left|\int_{X} \langle \phi(x), f \rangle_{\mathscr{H}(K)} d\xi(x)\right| \\
    &\leq \|f\|_{\mathscr{H}(K)}\int_{X} \|\phi(x)\|_{\mathscr{H}(K)} d|\xi|(x) < +\infty,
    \end{align*}
where $\vert \xi \vert$ is the variation measure of $\xi$. 
It is easy to see that $\Omega$ is a closed subspace of $ \mathscr{H}(K)$. Let $P_\Omega$ be the orthogonal projection of $ \mathscr{H}(K)$ onto $\Omega$. 
Note that for any $x \in \supp(\mu)$, $\phi(x) \in \Omega$ by taking $\xi$ as the delta measure at $x$.  Therefore, for any $x \in \supp(\mu)$, the optimizer $\hat{f}$ satisfies
\begin{equation*}
    (P_\Omega \hat{f})(x) = \langle P_\Omega \hat{f}, \phi(x) \rangle_{\mathscr{H}(K)} = \langle  \hat{f}, P_\Omega \phi(x) \rangle_{\mathscr{H}(K)} = \langle  \hat{f},  \phi(x) \rangle_{\mathscr{H}(K)} =  \hat{f}(x).
\end{equation*}
Therefore, 
  \begin{equation*}
   \int_{X} c( P_\Omega \hat{f}(x), g(x))  d\mu(x) + \lambda \Vert P_\Omega \hat{f} \Vert^p_{\mathscr{H}(K)} \leq  \int_{X} c( \hat{f}(x), g(x))  d\mu(x) + \lambda  \Vert \hat{f} \Vert^p_{\mathscr{H}(K)}.
\end{equation*}
Hence, $P_\Omega \hat{f}$ is also an optimizer.
Since the optimizer is unique,  we conclude $P_\Omega \hat{f}= \hat{f}$ and thus $\hat{f} \in \Omega$. Therefore, there exists a sequence of $\mathbb{F}$-measures $\{\nu_n\}_{n=1}^{\infty}$ on $X$ with  $\supp(\nu_n) \subset \supp(\mu)$ for each $n$, such that 
\begin{equation*}
    \hat{f} = \underset{n \rightarrow \infty}{\text{lim}}\int_{X} \phi(x) d\nu_n(x).
\end{equation*} 
If $\mu$ is finitely supported or $\mathscr{H}(K)$ is finite-dimensional, then the set $\mathscr{A}$ is automatically closed and $\Omega = \mathscr{A}$.
Thus there exists an $\mathbb{F}$-measure $\nu$ on $X$ with  $\supp(\nu) \subset \supp(\mu)$ such that 
\begin{equation*}
    \hat{f} = \int_{X} \phi(x) d\nu(x). \qedhere
\end{equation*} 
\end{proof}

We can furthermore assume that the $\mathbb{F}$-measures $\xi$ on $X$ are finitely supported, and the set $\mathscr{A}$ then becomes the linear span of $\{k_x\}_{x \in \supp(\mu)}$.
This leads to the following corollary:
\begin{corollary}[Discrete Probabilistic Representer]\label{GRepresenter2}
    Let $\hat{f}$ be the unique optimizer in Theorem \ref{muPenaltyThm}.
    Then $\hat{f}$ is in the closure of the linear span of $\{k_x\}_{x \in \supp(\mu)}$.
    Furthermore, if $\mu$ is finitely supported, or $\mathscr{H}(K)$ is finite-dimensional, 
 then $\hat{f}$ is in the linear span of $\{k_x\}_{x \in \supp(\mu)}$.
\end{corollary}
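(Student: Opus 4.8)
The plan is to repeat the proof of Theorem \ref{GRepresenter1} almost verbatim, but with the set $\mathscr{A}$ replaced by the linear span $\mathscr{A}_0 := \operatorname{span}\{k_x : x \in \supp(\mu)\}$, exactly as the paragraph preceding the statement suggests. The key observation is that if one restricts to \emph{finitely supported} $\mathbb{F}$-measures $\xi = \sum_{i=1}^n \alpha_i \delta_{y_i}$ with each $y_i \in \supp(\mu)$, then the defining integral collapses to a genuine finite sum, $\int_X \phi(x)\, d\xi(x) = \sum_{i=1}^n \alpha_i k_{y_i} \in \mathscr{A}_0$, and conversely every element of $\mathscr{A}_0$ arises in this way. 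Consequently the duality construction and the boundedness estimate used in the proof of Theorem \ref{GRepresenter1} become trivial, and in particular the finiteness condition \ref{cond:bddness_representer_thm} is automatically met for such $\xi$, so it is not needed for this corollary.

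First I would let $\Omega_0$ denote the closure of $\mathscr{A}_0$ in $\mathscr{H}(K)$, which is a closed subspace, and let $P_{\Omega_0}$ be the orthogonal projection onto it. For every $x \in \supp(\mu)$ we have $k_x = \phi(x) \in \mathscr{A}_0 \subset \Omega_0$ (take $\xi = \delta_x$), so $P_{\Omega_0}\phi(x) = \phi(x)$. The reproducing property then gives, for each such $x$,
\[
 (P_{\Omega_0}\hat{f})(x) = \langle P_{\Omega_0}\hat{f}, \phi(x) \rangle_{\mathscr{H}(K)} = \langle \hat{f}, P_{\Omega_0}\phi(x) \rangle_{\mathscr{H}(K)} = \langle \hat{f}, \phi(x) \rangle_{\mathscr{H}(K)} = \hat{f}(x),
\]
so $P_{\Omega_0}\hat{f}$ and $\hat{f}$ agree pointwise on all of $\supp(\mu)$. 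Since $\mu$ is concentrated on $\supp(\mu)$, the cost integral in Problem \ref{muPenalty} is unchanged under this replacement, while $\|P_{\Omega_0}\hat{f}\|_{\mathscr{H}(K)} \leq \|\hat{f}\|_{\mathscr{H}(K)}$ because an orthogonal projection is norm-nonincreasing. Hence $P_{\Omega_0}\hat{f}$ is again an optimizer, and the uniqueness asserted in Theorem \ref{muPenaltyThm} forces $P_{\Omega_0}\hat{f} = \hat{f}$, that is, $\hat{f} \in \Omega_0$, which is precisely the closure of the span.

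For the ``furthermore'' part the only remaining point is to verify that $\mathscr{A}_0$ is already closed, so that $\Omega_0 = \mathscr{A}_0$. If $\mu$ is finitely supported then $\supp(\mu)$ is a finite set and $\mathscr{A}_0$ is a finite-dimensional subspace, hence closed; if $\mathscr{H}(K)$ is finite-dimensional then every linear subspace, in particular $\mathscr{A}_0$, is closed. In either case $\hat{f} \in \Omega_0 = \mathscr{A}_0 = \operatorname{span}\{k_x : x \in \supp(\mu)\}$. I do not expect any genuine obstacle here, since the argument is a direct specialization of Theorem \ref{GRepresenter1}; the one point requiring a little care is that the equality $(P_{\Omega_0}\hat{f})(x) = \hat{f}(x)$ holds for \emph{every} $x \in \supp(\mu)$ and not merely $\mu$-almost everywhere, which is exactly what makes the two cost integrals equal rather than only comparable, and this is guaranteed by the reproducing property together with $\phi(x) \in \Omega_0$.
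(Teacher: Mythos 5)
Your proposal is correct and is essentially identical to the paper's own proof: the paper also redefines $\mathscr{A}$ as $\operatorname{span}\{k_x\}_{x \in \supp(\mu)}$, takes its closure $\Omega$, and reruns the projection--uniqueness argument of Theorem \ref{GRepresenter1}, concluding with the same finite-dimensionality observation for the ``furthermore'' part. Your explicit remarks that condition \ref{cond:bddness_representer_thm} becomes automatic for finitely supported measures and that $(P_{\Omega_0}\hat{f})(x)=\hat{f}(x)$ holds at \emph{every} point of $\supp(\mu)$ (not just $\mu$-a.e.) are exactly the details the paper leaves implicit.
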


 \begin{proof}
Here, we use the following definition of $\mathscr{A}$:
     \begin{equation*}
        \mathscr{A}:= \text{span}{\{k_x\}_{x \in \supp(\mu)}} = \left\{\sum_{i=1}^m w_i k_{x_i}: m \in \mathbb{N}^{+},  \{w_i\}_{i=1}^m \subset \mathbb{F},  \{x_i\}_{i=1}^m \subset \supp(\mu) \right \},
    \end{equation*}
and let $\Omega$ be the closure of $\mathscr{A}$ in $\mathscr{H}(K)$.
Then $\Omega$ is a closed linear subspace of $ \mathscr{H}(K)$. 
Using the same arguments as in Theorem \ref{GRepresenter1}, we conclude $\hat{f} \in \Omega$.
If $\mu$ is finitely supported or $\mathscr{H}(K)$ is finite-dimensional, the set $\mathscr{A}$ is automatically closed and thus $\hat{f} \in \Omega = \mathscr{A}$.
\end{proof}

Note that when $\mu$ is finitely supported, our result in  Corollary \ref{GRepresenter2} recovers the classical representer theorem (Theorem \ref{Representer}) while extending it to broader settings.
On the other hand, when $\mathscr{H}(K)$ is finite-dimensional,  the unique optimizer is also in the linear span of $\{k_x\}_{x \in \supp(\mu)}$, which does not depend on the cardinality of the support of the measure $\mu$. There are many finite-dimensional reproducing kernel Hilbert spaces in applications. For example, the vector space of polynomials of degree $n-1$ or less is an $n$-dimensional RKHS (See page 7 of \cite{wahba1990spline}), and the associated graph-based RKHS of a given connected graph is finite-dimensional (See \cite{seto2014gram} and page 56 of \cite{saitoh2016theory}). Furthermore, as a subspace of Sobolev space, the space $\mathcal{S}(z_1, \cdots, z_k)$ of polynomial splines of order $r$ with simple knot sequence $z_1 < \cdots < z_k$  is a finite-dimensional RKHS with dimension $r+k$ (See page 110 of \cite{berlinet2011reproducing}), which has been used in many areas, such as nonparametric regression in statistics \cite{berlinet2011reproducing, wahba1990spline} and finite element method for partial differential equations \cite{hollig2003finite}. 
Corollary \ref{GRepresenter2} indicates that the probabilistic approximation problem over a finite-dimensional RKHS turns out to be a measure quantization and sampling problem about the measure $\mu$ with respect to the regularized loss function in Problem \ref{muPenalty}, which connects the probabilistic approximation problem to the sampling theory.

\begin{remark}
     When the cost function $c: \mathbb{F} \times \mathbb{F} \rightarrow \mathbb{R}^+$ and the target function $g: X \rightarrow \mathbb{F}$ satisfy the corresponding assumptions in Theorem \ref{muPenaltyThm}, the existence and uniqueness still hold for the following more general problem
\begin{equation}
    \underset{f \in \mathscr{H}(K)}{\text{inf}} \ \int_{X} c(f(x),g(x))  d\mu(x)+ h( \Vert f \Vert_{\mathscr{H}(K)}),
\end{equation}
where $h:\mathbb{R}^{+} \rightarrow \mathbb{R}^{+}$ can be any strictly convex function.
Furthermore, when $h:\mathbb{R}^{+} \rightarrow \mathbb{R}^{+}$ is increasing and strictly convex, the probabilistic representer theorems (Theorem \ref{GRepresenter1} and Corollary \ref{GRepresenter2}) also hold. 
\end{remark}

This section is ended with the following corollaries showing the representer theorems for the optimizer of Problem \ref{normConstraint} under the condition \ref{cond:bddness_representer_thm}.
\begin{corollary}\label{constraint: GRepresenter1}
  Let $\hat{f}$ be the unique optimizer in Proposition \ref{constrain:muPenaltyThm} when the cost function $c(f(x), g(x)) = |f(x)-g(x)|^p$ where $p>1$.
 Assume the probability measure $\mu$ satisfies that for any $\mathbb{F}$-measure $\xi$ on $X$ with $\supp (\xi) \subset \supp(\mu)$,
\begin{equation*}
    \int_X \|\phi(x)\|_{\mathscr{H}(K)}d|\xi|(x)<+\infty.
\end{equation*}
Then there exists a sequence of $\mathbb{F}$-measures $\{\nu_n\}_{n=1}^{\infty}$ on $X$ with  $\supp(\nu_n) \subset \supp(\mu)$ such that 
\begin{equation*}
    \hat{f} = \underset{n \rightarrow \infty}{\text{lim}}\int_{X} \phi(x) d\nu_n(x).
\end{equation*} 
Furthermore, if $\mu$ is finitely supported, or $\mathscr{H}(K)$ is finite-dimensional, then there exists an $\mathbb{F}$-measure $\nu$ on $X$ with  $\supp(\nu) \subset \supp(\mu)$ such that 
\begin{equation*}
    \hat{f} = \int_{X} \phi(x) d\nu(x).
\end{equation*} 
\end{corollary}

\begin{proof}
Define
    \begin{equation*}
        \mathscr{A}:=\left\{\int_{X} \phi(x) d\xi(x): \xi \ \text{is an $\mathbb{F}$-measure on $X$}, \supp(\xi) \subset \supp(\mu) \right \},
    \end{equation*}
    and let $\Omega$ be the closure of $\mathscr{A}$ in $\mathscr{H}(K)$, where the integral within $\mathscr{A}$ is also defined as the Pettis integral.
Then $\Omega$ is a closed subspace of $ \mathscr{H}(K)$.  Let $P_\Omega$ be the orthogonal projection of $ \mathscr{H}(K)$ onto $\Omega$. Thus, for any $x \in \supp(\mu), (P_\Omega \hat{f})(x) =  \hat{f}(x)$ (using the same argument in Theorem \ref{GRepresenter1}). Therefore, 
  \begin{equation*}
   \int_{X} c( P_\Omega \hat{f}(x), g(x))  d\mu(x) =  \int_{X} c( \hat{f}(x), g(x))  d\mu(x).
\end{equation*}
Since $\|P_\Omega \hat{f}\|_{\mathscr{H}(K)} \leq \| \hat{f}\|_{\mathscr{H}(K)} \leq M$, then  $P_\Omega \hat{f} \in \Omega$ is also an optimizer.
Since the optimizer is unique,  then $\hat{f} = P_\Omega \hat{f} \in \Omega$.
If $\mu$ is finitely supported or $\mathscr{H}(K)$ is finite-dimensional, then the set $\mathscr{A}$ is automatically closed and thus $ \hat{f} \in \Omega = \mathscr{A}$.
\end{proof}

\begin{corollary}\label{constraint:GRepresenter2}
    Let $\hat{f}$ be the unique optimizer in Proposition \ref{constrain:muPenaltyThm} when the cost function $c(f(x), g(x)) = |f(x)-g(x)|^p$ where $p>1$.
    Then $\hat{f}$ is in the closure of the linear span of $\{k_x\}_{x \in \supp(\mu)}$.
    Furthermore, if $\mu$ is finitely supported, or $\mathscr{H}(K)$ is finite-dimensional, 
 then $\hat{f}$ is in the linear span of $\{k_x\}_{x \in \supp(\mu)}$.
\end{corollary}
 \begin{proof}
Here, we use the same notation of $\mathscr{A}$:
     \begin{equation*}
        \mathscr{A}:= \linearspan{\{k_x\}_{x \in \supp(\mu)}} = \left\{\sum_{i=1}^m w_i k_{x_i}: m \in \mathbb{N}^{+},  \{w_i\}_{i=1}^m \subset \mathbb{F},  \{x_i\}_{i=1}^m \subset \supp(\mu) \right \},
    \end{equation*}
and let $\Omega$ be the closure of $\mathscr{A}$. Using the same argument from Corollary \ref{constraint: GRepresenter1}, we conclude that $\hat{f} \in \Omega$. 
 If $\mu$ is finitely supported or $\mathscr{H}(K)$ is finite-dimensional, the set $\mathscr{A}$ is automatically closed and thus $\hat{f} \in \Omega = \mathscr{A}$.
\end{proof}


 \vspace{0.3cm}

\section{Discussions on the Representer Theorem}\label{section:conjecture}
The preferable result in the probabilistic representer theorem (Theorem \ref{GRepresenter1}) is that the minimizer $\hat{f}$ is the kernel embedding of some $\mathbb{F}$-measure $\nu$ such that $\hat{f}$ can be represented directly by the $\mathbb{F}$-measure $\nu$, instead of an approximating sequence $\{\nu_n\}$. 
We conjecture that such a desirable closed form holds merely under the assumption \ref{cond:bddness_representer_thm}:

\begin{conjecture}\label{conj:representer}
    Let $\hat{f}$ be the unique optimizer in Theorem \ref{muPenaltyThm}.
    Assume the probability measure $\mu$ satisfies that for any $\mathbb{F}$-measure $\xi$ on $X$ with $\supp (\xi) \subset \supp(\mu)$, the following holds: 
\begin{equation*}
    \int_X \|\phi(x)\|_{\mathscr{H}(K)}d|\xi|(x)<+\infty.
\end{equation*}
Then there exists an $\mathbb{F}$-measure $\nu$ on $X$ with  $\supp(\nu) \subset \supp(\mu)$ such that 
\begin{equation*}
    \hat{f} = \int_{X} \phi(x) d\nu(x).
\end{equation*} 
\end{conjecture}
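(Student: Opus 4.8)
The plan is to upgrade the conclusion $\hat f\in\Omega=\overline{\mathscr A}$ of Theorem \ref{GRepresenter1} to the sharper statement $\hat f\in\mathscr A$, by exploiting the fact that $\hat f$ is not an arbitrary element of $\Omega$ but the \emph{minimizer} of a convex functional. First I would record a reformulation of the hypothesis: writing $S=\supp\mu$, the condition \ref{cond:bddness_representer_thm} is in fact equivalent to $M:=\sup_{x\in S}\|\phi(x)\|_{\mathscr{H}(K)}=\sup_{x\in S}\sqrt{K(x,x)}<\infty$. Indeed, if $\|\phi\|$ were unbounded on $S$ one could choose distinct $x_n\in S$ with $\|\phi(x_n)\|\ge 2^n$ and form $\xi=\sum_n n^{-2}\delta_{x_n}$, a finite $\mathbb F$-measure with $\supp\xi\subset S$ for which $\int_X\|\phi\|\,d|\xi|=\sum_n n^{-2}\|\phi(x_n)\|=\infty$, contradicting \ref{cond:bddness_representer_thm}; the converse is immediate. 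Consequently the map $T\colon \xi\mapsto\int_X\phi(x)\,d\xi(x)$ is a \emph{bounded} linear operator from the Banach space $M(S)$ of finite $\mathbb F$-measures on $S$ (with the total-variation norm) into $\mathscr{H}(K)$, with $\mathscr A=\operatorname{ran}T$ and $\hat f\in\overline{\operatorname{ran}T}=\Omega$, and moreover $|\hat f(x)|\le M\|\hat f\|$ so that $\hat f\in L^\infty(X,\mu)$. It is worth stressing why no purely soft argument can suffice: when, say, $S$ is compact and $\phi$ is continuous, $T$ is compact, so its range is \emph{not} closed once $\dim\Omega=\infty$; the special role of the minimizer must therefore be used.

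The core of the argument is the first-order optimality condition. Since we are in the regime $p>1$ with $c(\cdot,z)$ convex (the regime in which Theorem \ref{muPenaltyThm} yields a unique $\hat f$), the objective $J(f)=\Phi(f)+\|f\|_{\mathscr{H}(K)}^p$, with $\Phi(f):=\int_X c(f(x),g(x))\,d\mu(x)$, is convex, so $0\in\partial J(\hat f)$. Assuming $\hat f\neq0$ (the case $\hat f=0$ being trivial, with $\nu=0$), the regularizer contributes the single gradient $p\|\hat f\|^{p-2}\hat f$. The plan is to compute $\partial\Phi(\hat f)$ through the factorization $\Phi=\widetilde\Phi\circ E$, where $E\colon\mathscr{H}(K)\to L^q(X,\mu)$ is the bounded sampling operator $(Ef)(x)=\langle f,\phi(x)\rangle$ and $\widetilde\Phi(u)=\int_X c(u(x),g(x))\,d\mu(x)$ is an integral functional. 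By Rockafellar's theory of integral functionals together with the chain rule, every element of $\partial\Phi(\hat f)$ has the form $E^{*}(s)$ with $s\in L^{q'}(X,\mu)$ a measurable selection of $x\mapsto\partial_a c(\hat f(x),g(x))$; crucially $E^{*}(s)=\int_X \overline{s(x)}\,\phi(x)\,d\mu(x)=T(\overline{s}\,\mu)\in\operatorname{ran}T$, and the representing measure $\overline s\,\mu$ is absolutely continuous with respect to $\mu$. The optimality relation $0=E^*(s)+p\|\hat f\|^{p-2}\hat f$ then yields
\begin{equation*}
  \hat f=\int_X \phi(x)\,d\nu(x),\qquad d\nu=-\frac{\overline{s(x)}}{p\,\|\hat f\|_{\mathscr{H}(K)}^{p-2}}\,d\mu(x),
\end{equation*}
which is exactly the desired representation, with $\nu\ll\mu$ and hence $\supp\nu\subset\supp\mu$. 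As a sanity check, in the quadratic case $p=2$, $c(a,b)=|a-b|^2$ this recovers the explicit $d\nu=(g(x)-\hat f(x))\,d\mu(x)$ obtained by differentiating $J$ directly.

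The main obstacle is the step that produces an \emph{integrable} subgradient selection $s$, equivalently a \emph{finite} measure $\nu$: one needs $\int_X|s(x)|\,d\mu(x)<\infty$. Bounding the subgradient of a finite convex function by a difference quotient gives $|s(x)|\le C\sup_{|w|\le \rho}c(w,g(x))$ for $\rho:=M\|\hat f\|+1$, and the same quantity controls the local boundedness of $\Phi$ near $\hat f$ that is needed both for the Moreau--Rockafellar sum rule and for ruling out a singular (merely finitely additive) part in $\partial\widetilde\Phi$. Thus the whole argument goes through cleanly under the mild additional hypothesis
\begin{equation*}
  \int_X \sup_{|w|\le\rho} c(w,g(x))\,d\mu(x)<\infty \quad\text{for every } \rho>0,
\end{equation*}
which strengthens the bare assumption $c(0,g(\cdot))\in L^1(X,\mu)$ of Theorem \ref{muPenaltyThm}. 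The genuinely hard part of the conjecture is to remove this extra integrability condition. The natural route would be an approximation scheme: replace $c(\cdot,z)$ by its Moreau--Yosida regularizations $c_k(\cdot,z)$, solve the corresponding problems to obtain $\hat f_k=\int_X\phi\,d\nu_k$ with $\nu_k\ll\mu$, and pass to the limit. The difficulty I expect to dominate is obtaining uniform control of the total variations $\|\nu_k\|$ and extracting a limiting $\mathbb F$-measure $\nu$ supported in $S$; since $T$ is typically non-invertible (indeed compact) on its range, bounds on $\hat f_k=T\nu_k$ do not transfer to bounds on $\nu_k$, and weak-$*$ compactness in $M(S)$ is not available for a general, non-locally-compact Polish $S$. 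Controlling $\{\nu_k\}$ is precisely where I expect the argument to require a genuinely new idea.
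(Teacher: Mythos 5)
The statement you were given is Conjecture \ref{conj:representer}; the paper itself does \emph{not} prove it. The authors' proved result in this direction, Theorem \ref{GRepresenter1}, only yields the weaker conclusion that $\hat f$ is a \emph{limit} of integrals $\int_X\phi\,d\nu_n$, upgrading to a single representing measure only when $\mu$ is finitely supported or $\mathscr{H}(K)$ is finite-dimensional; beyond that the paper offers only Example \ref{ex:hardy} as evidence. Your proposal does not close the conjecture either, and you say so yourself: the subdifferential argument that produces an integrable selection $s$ (equivalently, a finite measure $\nu$) requires the additional hypothesis $\int_X\sup_{|w|\le\rho}c(w,g(x))\,d\mu(x)<\infty$, which is strictly stronger than the standing assumption $c(0,g(\cdot))\in L^1(X,\mu)$ of Theorem \ref{muPenaltyThm}. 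Concretely, without that hypothesis $E\hat f$ need not lie in the $L^\infty$-interior of $\operatorname{dom}\widetilde\Phi$, so the subdifferential of the integral functional can carry a nonzero singular (purely finitely additive) component and Rockafellar's theory yields no $L^1$ density; and your fallback Moreau--Yosida scheme stalls exactly where you predict, at the uniform total-variation bound on $\nu_k$, since bounds on $\hat f_k=T\nu_k$ give no control on $\nu_k$ when $T$ is compact, and weak-$*$ compactness in $M(S)$ is unavailable for a general Polish $S$. So what you have is a correct \emph{conditional} result, not a proof of the statement; the conjecture remains open after your argument, just as it is open in the paper.

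That said, the conditional result and the surrounding observations are sound, go beyond what the paper establishes, and are worth recording. The equivalence of hypothesis \ref{cond:bddness_representer_thm} with $\sup_{x\in\supp\mu}\sqrt{K(x,x)}<\infty$ is correct (your $\xi=\sum_n n^{-2}\delta_{x_n}$ construction works: points with distinct norms are distinct, and $\supp\mu$ is closed, so $\supp\xi\subset\supp\mu$). The remark that $T$ is compact when $\supp\mu$ is compact and $\phi$ continuous --- hence $\mathscr{A}=\operatorname{ran}T$ is non-closed whenever $\Omega$ is infinite-dimensional --- is the right explanation of why no soft closed-range upgrade of Theorem \ref{GRepresenter1} can possibly work. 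Moreover, your first-order condition predicts $\nu\ll\mu$ with $d\nu=(g-\hat f)\,d\mu$ in the case $p=2$, $c(a,b)=|a-b|^2$, and this exactly reproduces the paper's Example \ref{ex:hardy}: there $b_n-a_n=\frac{n+1}{r^{2n}}a_n$, so the measure $\nu=\sum_n a_n\xi_n$ constructed in that example is precisely $(g-\hat f)\,d\mu$ with $\mu$ the uniform probability measure on $D_r$; your method thus recovers and explains the paper's supporting example. A reasonable way to present your work is as a partial result toward the conjecture: under \ref{cond:bddness_representer_thm} together with the displayed integrability condition (satisfied, for instance, whenever $c(w,z)\le a(z)+b|w|^k$ with $a\circ g\in L^1(X,\mu)$), the representation $\hat f=\int_X\phi\,d\nu$ holds, and the representing measure can in fact be taken absolutely continuous with respect to $\mu$.
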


Once this conjecture is true, the optimizer of Problem \ref{muPenalty} has a measure representation form, which makes the probabilistic approximation problem become a sampling problem from the perspective of numerical applications: sample the probability measure $\mu$ to get an $\mathbb{F}$-measure $\nu = \sum\limits_{i=1}^m w_i \delta_{y_i}$ on $X$ and a candidate $\int_{X} \phi(x) d\nu(x)$ for the optimizer, and then update $\nu$ to make the regularized loss function in Problem \ref{muPenalty} as small as possible. Therefore, this conjecture has potential applications in areas where function approximation over RKHSs plays a central role. 
To support this conjecture as well as to illustrate Theorem \ref{GRepresenter1}, we provide the following example in the Hardy space $H^2(D)$ on the unit disk $D$, where the optimizer of the associated probabilistic approximation problem has a closed measure representation form. 

\begin{example}[Measure Representation]\label{ex:hardy}
Consider the Hardy space $H^2(D)$ on the unit disk $D$, which is an RKHS with Szegő kernel $\phi(w)(z) = \frac{1}{1 - \overline{w} z}$ for $w, z \in D$.
Fix $0<r<1$ and let $D_r$ be the open disk in $\mathbb{C}$ centered at the origin of radius $r$.
Let $\mu$ be the uniform probability measure on $D_r$.
First we check Assumption \ref{cond:bddness_representer_thm} in this setting: for any $\mathbb{F}$-measure $\xi$ with $\supp (\xi) \subset \supp (\mu)$, we have
\begin{equation*}
    \int_D \|\phi(w)\|_{H^2(D)}d|\xi|(w)=\int_{D_r} \frac{1}{1-|w|^2}d|\xi|(w)\leq \frac{1}{1-r^2}\|\xi\|<+\infty,
\end{equation*}
where $\|\xi\|$ is the total variation of $\xi$.
Now let $g \in B^2(D)$, the Bergman space consisting of Lebesgue-$L^2$ holomorphic functions on $D$. Since $d\mu(w) = \frac{1}{|D_r|}d\Area(w)$, then we consider the following minimization problem:
    \begin{equation*}
   \underset{f \in H^2(D)}{\inf} \frac{1}{|D_r|} \int_{D_r} |f(w)-g(w)|^2  d\Area(w) + \lambda \Vert f \Vert^2_{H^2(D)},
\end{equation*}
where $\lambda>0$ and $|D_r| = \pi r^2$.
If we use the power series expressions $f=\sum_{n=0}^\infty a_n z^n$ and $g=\sum_{n=0}^\infty b_nz^n$, then we can apply variations on the coefficients $a_n$ to get an Euler-Lagrange equation for the minimizer $\hat{f}$:
\begin{equation}\label{E-L_eq_of_ex:hardy}
    \left(1+\frac{\lambda(n+1)}{r^{2n}}\right)a_n=b_n.
\end{equation}
We verify that such $\hat{f}=\sum_{n=0}^\infty a_n z^n$ defined by the equation \eqref{E-L_eq_of_ex:hardy} is in $H^2(D)$ since the sequence $\{a_n\}$ is square-summable:
\begin{equation*}
    \sum_{n=0}^\infty |a_n|^2 = \sum_{n=0}^\infty \frac{r^{4n}|b_n|^2}{(r^{2n}+\lambda(n+1))^2} \leq \frac{1}{\lambda^2} \sum_{n=0}^\infty \frac{|b_n|^2}{(n+1)^2} \leq \frac{1}{\lambda^2} \sum_{n=0}^\infty \frac{|b_n|^2}{n+1} <+\infty,
\end{equation*}
where the last inequality follows from that $g=\sum_{n=0}^\infty b_nz^n \in B^2(D)$ with $\|g\|_{B^2(D)}^2 =\sum_{n=0}^\infty \frac{|b_n|^2}{n+1} <\infty$.
Thus, $\hat{f}$ is determined by $g$ via this formula, and our goal is to find a $\mathbb{C}$-measure representation of $\hat{f}$, as in Conjecture \ref{conj:representer}.
\par
To get the above equation \eqref{E-L_eq_of_ex:hardy}, first note that $\|f\|^2_{H^2(D)} = \sum_{n=0}^\infty |a_n|^2$.
For the squared error term, we have 
\begin{equation*}
\begin{split}
    \int_{D_r} |f(w) - g(w)|^2 \, d\Area(w) &= \int_{D_r} \left|\sum_{n=0}^\infty (a_n - b_n) w^n\right|^2 \, d\Area(w) \\
&= \sum_{k=0}^\infty \sum_{n=0}^\infty (a_k - b_k) \overline{(a_n - b_n)} \int_{D_r} w^k \bar{w}^n \, d\Area(w),
\end{split}
\end{equation*}
where the interchange of summation and integration follows from that the series $ \sum_{k=0}^\infty \sum_{n=0}^\infty (a_k - b_k)\overline{(a_n - b_n)}\, w^k \bar{w}^n$ is uniformly convergent in $w \in D_r$.
In polar coordinates, we can write $w = \rho e^{i\theta}$ and $d\Area(w) = \rho d\rho d\theta$.
Then
$$
\int_{D_r} w^k \bar{w}^n \, d\Area(w)
= \int_0^r \int_0^{2\pi} \rho^{k+n} e^{i(k-n)\theta} \rho \, d\theta \, d\rho
= \int_0^r \rho^{k+n+1} d\rho \int_0^{2\pi} e^{i(k-n)\theta} d\theta.
$$
Since
$$
\int_0^{2\pi} e^{i(k-n)\theta} d\theta =
\begin{cases}
2\pi & \text{if } k = n, \\
0 & \text{if } k \neq n,
\end{cases}
$$
we have
\begin{equation}\label{eqn:hardyspace}
    \int_{D_r} w^k \bar{w}^n \, d\Area(w) = 
\begin{cases}
\displaystyle 2\pi \int_0^r \rho^{2n+1} \, d\rho = \frac{\pi r^{2n+2}}{n+1} & \text{if } k = n, \\
0 & \text{if } k \neq n.
\end{cases}
\end{equation}
Therefore, 
$$
\int_{D_r} |f(w) - g(w)|^2 \, d\Area(w) = \sum_{n=0}^\infty |a_n - b_n|^2 \  \frac{\pi r^{2n+2}}{n+1}.
$$
Now we can write the error functional as
$$
J := \frac{1}{|D_r|}\int_{D_r} |f-g|^2  d\Area + \lambda \Vert f \Vert^2_{H^2(D)} = \sum_{n=0}^\infty \frac{ r^{2n}}{n+1} |a_n - b_n|^2+ \lambda \sum_{n=0}^\infty |a_n|^2.
$$
The Euler-Lagrange equation of $J$ over the sequence $\{a_n\}$ can now be derived by setting the variations to zero for each $k$:
\begin{align*}
    0&=\frac{d}{dt}|_{t=0} J(\{a_n\}+te_k)=2\frac{r^{2k}}{k+1}\Re(a_k-b_k)+2\lambda \Re (a_k),\\
    0&=\frac{d}{dt}|_{t=0} J(\{a_n\}+ite_k)=2\frac{r^{2k}}{k+1}\Im(a_k-b_k)+2\lambda \Im (a_k),
\end{align*}
where $e_k$ is the sequence with the $k$-th entry being $1$ and others $0$.
Therefore,
\begin{equation*}
    \frac{r^{2k}}{k+1}(a_k-b_k)+\lambda a_k=0
\end{equation*}
for each $k$, concluding the equation \eqref{E-L_eq_of_ex:hardy}.

\par
To find a $\mathbb{C}$-measure representation of $\hat{f}$, our strategy is to first find the measure representation for the basis vectors $\{z^k\}$ of $H^2(D)$.
For $z, w \in D$, we can write the Szegő kernel as $\phi(w)(z) =\frac{1}{1 - \bar{w} z} = \sum\limits_{n=0}^\infty \bar{w}^n z^n$.
Then computation gives
\begin{equation*}
    \begin{split}
        \int_{D_r} \frac{1}{1 - \bar{w} z} \frac{k+1}{\pi} r^{-2k-2} w^k \, d\Area(w)
&= \frac{k+1}{\pi} r^{-2k-2} \left(\sum_{n=0}^\infty z^n \int_{D_r} w^k \bar{w}^n \, d\Area(w)\right)\\
&= \frac{k+1}{\pi} r^{-2k-2} \left(z^k \ \frac{\pi r^{2k+2}}{k+1}\right) = z^k,
    \end{split}
\end{equation*}
where the first equality is due to that for fixed $z \in D_r$, the series $\sum_{n=0}^\infty z^n w^k \bar{w}^n$ is uniformly convergent in $w \in D_r$, 
and the second equality follows from equation \eqref{eqn:hardyspace}.
We thus see that $z^k$ can be represented by the $\mathbb{C}$-measure $\xi_k$. That is to say,  
$$
z^k = \int_{D_r} \phi(w)(z)d\xi_k(w),
$$
where
\begin{equation*}
    d\xi_k(w):=\frac{k+1}{\pi} r^{-2k-2} w^k d\Area\llcorner D_r(w).
\end{equation*}

From $\hat{f}=\sum_{n=0}^\infty a_nz^n$, we would imagine that $\hat{f}$ is represented by the measure $\nu:=\sum_{n=0}^\infty a_n \xi_n$.
Indeed this is a well-defined $\mathbb{C}$-measure since we have
\begin{align*}
        \|\nu\|\leq \sum_{n=0}^\infty |a_n|  \|\xi_n\|
    &= \sum_{n=0}^\infty \frac{r^{2n}}{r^{2n}+\lambda(n+1)}|b_n| \cdot \frac{2n+2}{n+2}r^{-n} \\
    &\leq \frac{1}{\lambda}\sqrt{\sum_{n=0}^\infty \frac{|b_n|^2}{n+1}}\sqrt{\sum_{n=0}^\infty\frac{4}{n+1}r^{2n}}<+\infty,
\end{align*}
where the first square root is finite since $g=\sum_{n=0}^\infty b_nz^n \in B^2(D)$. 
\par
Now we show precisely that $\hat{f}$ is represented by $\nu$.
Let $\nu_k  = \sum_{n=0}^k a_n \xi_n$ be the partial sums of $\nu$.
Then we have $\hat{f} = \lim_{k \rightarrow \infty} \int_D \phi d \nu_k$ strongly in $H^2(D)$ since $\{z^k\}_{k\in \N_0}$ is an orthonormal basis.
On the other hand, 
$\int_D \phi d \nu_k$ converges weakly to $\int_D \phi d \nu$ in $H^2(D)$ since all functions in $H^2(D)$ are bounded on $D_r$. Thus, by the uniqueness of the weak limit, we have $\hat{f}=\int_D \phi d\nu$.
\par
We also point out that the unregularized problem when $\lambda = 0$ might not have a solution, since the $H^2(D)$-norm of the optimizer can blow up.
Indeed, when $\lambda = 0$ we have $a_n=b_n$ and thus $\| \hat{f}\|_{H^2(D)}^2 = \sum_{n=1}^\infty |b_n|^2$, which can be infinite since $g$ might not be in $H^2(D)$.
Therefore, regularization makes the problem solvable.
\end{example}


\section{Conclusion and Future Work}\label{sec:conclusion}

In this work, we generalize the least squares approximation problem to a probabilistic framework in reproducing kernel Hilbert spaces (RKHSs). By replacing square sum with integration against a probability measure $\mu$, we establish the existence and uniqueness of optimizers under broad conditions (Theorems~\ref{pexistance}, \ref{muPenaltyThm}, and Proposition \ref{constrain:muPenaltyThm}). Our main contribution is that we show representer-type theorems for optimizers (Theorem \ref{GRepresenter1}, Corollaries \ref{GRepresenter2}, \ref{constraint: GRepresenter1}, and \ref{constraint:GRepresenter2}),
which allows us to:

\begin{itemize}
    \item Unify discrete and continuous settings: when $\mu$ is finitely supported, our results recover the classical representer theorem; for general $\mu$, optimizers admit approximations via kernel embeddings. 
    \item Connect the approximation and sampling theory: the finite-dimensional case builds a connection to measure quantization theory, while the Hardy space example demonstrates Conjecture \ref{conj:representer} that has potential applications.  
\end{itemize}

As Conjecture \ref{conj:representer} stated, a central open question in this work is whether the optimizer $\hat{f}$ in Theorem~\ref{GRepresenter1} \emph{always} admits a representation $\hat{f} = \int_X \phi(x) \, d\nu(x)$ for some $\mathbb{F}$-measure $\nu$, even when $\mathscr{H}(K)$ is infinite-dimensional and $\mu$ is not finitely supported. The success of Example~\ref{ex:hardy} on the Hardy space motivates further study of probabilistic approximation in other complex RKHSs (e.g., Bergman or Dirichlet spaces), which remains to be explored in the future. 

\vspace{0.5cm}

 \vspace{0.3cm}

{\bf Data Availability}: No datasets were used in this study.

 \vspace{0.3cm}

{\bf Author Contributions}: All authors contributed equally to the manuscript.

 \vspace{0.3cm}

{\bf Acknowledgment}: This work was partially inspired by probabilistic frames. The authors thank Saburou Saitoh and Qiyu Sun for valuable discussions. The authors also thank the editor and reviewer for their patience and efforts in handling this manuscript. Their feedback has significantly improved the quality of this paper.  
Part of the research in this paper was done when the second author visited NCTS. He would like to thank NCTS for their accommodation and hospitality.

\section*{Declarations}
{\bf Competing Interests}: The authors declare no competing interests.

\bibliographystyle{plain} 
\bibliography{refs}

\begin{thebibliography}{10}

\bibitem{agler2023pick}
Jim Agler and John~E McCarthy.
\newblock {\em Pick interpolation and {H}ilbert function spaces}, volume~44.
\newblock American Mathematical Society, 2023.

\bibitem{aronszajn1950theory}
Nachman Aronszajn.
\newblock Theory of reproducing kernels.
\newblock {\em Transactions of the American Mathematical Society}, 68(3):337--404, 1950.

\bibitem{aydin2021probability}
Ata~Deniz Ayd{\i}n and Aurelian Gheondea.
\newblock Probability error bounds for approximation of functions in reproducing kernel {H}ilbert spaces.
\newblock {\em Journal of Function Spaces}, 2021(1):6617774, 2021.

\bibitem{belkin2006manifold}
Mikhail Belkin, Partha Niyogi, and Vikas Sindhwani.
\newblock Manifold regularization: A geometric framework for learning from labeled and unlabeled examples.
\newblock {\em Journal of Machine Learning Research}, 7(11), 2006.

\bibitem{berlinet2011reproducing}
Alain Berlinet and Christine Thomas-Agnan.
\newblock {\em Reproducing kernel {H}ilbert spaces in probability and statistics}.
\newblock Springer Science \& Business Media, 2011.

\bibitem{blomqvist2003matrix}
Anders Blomqvist, Anders Lindquist, and Ryozo Nagamune.
\newblock Matrix-valued {N}evanlinna-{P}ick interpolation with complexity constraint: An optimization approach.
\newblock {\em IEEE Transactions on Automatic Control}, 48(12):2172--2190, 2003.

\bibitem{cucker2002mathematical}
Felipe Cucker and Steve Smale.
\newblock On the mathematical foundations of learning.
\newblock {\em Bulletin of the American mathematical society}, 39(1):1--49, 2002.

\bibitem{evgeniou2005learning}
Theodoros Evgeniou, Charles~A Micchelli, Massimiliano Pontil, and John Shawe-Taylor.
\newblock Learning multiple tasks with kernel methods.
\newblock {\em Journal of Machine Learning Research}, 6(4), 2005.

\bibitem{francis1987course}
Bruce~A Francis.
\newblock {\em A course in \(H^\infty\) control theory}.
\newblock Springer, 1987.

\bibitem{gheondea2024localisation}
Aurelian Gheondea and Cankat Tilki.
\newblock Localisation of regularised and multiview support vector machine learning.
\newblock {\em Journal of Machine Learning Research}, 25(368):1--47, 2024.

\bibitem{hoerl1970ridge}
Arthur~E Hoerl and Robert~W Kennard.
\newblock Ridge regression: Biased estimation for nonorthogonal problems.
\newblock {\em Technometrics}, 12(1):55--67, 1970.

\bibitem{hollig2003finite}
Klaus H{\"o}llig.
\newblock {\em Finite element methods with {B}-splines}.
\newblock SIAM, 2003.

\bibitem{kadri2016operator}
Hachem Kadri, Emmanuel Duflos, Philippe Preux, St{\'e}phane Canu, Alain Rakotomamonjy, and Julien Audiffren.
\newblock Operator-valued kernels for learning from functional response data.
\newblock {\em Journal of Machine Learning Research}, 17(20):1--54, 2016.

\bibitem{kukush2020gaussian}
Alexander Kukush.
\newblock {\em Gaussian measures in {H}ilbert space: construction and properties}.
\newblock John Wiley \& Sons, 2020.

\bibitem{mercer1909xvi}
James Mercer.
\newblock Xvi. functions of positive and negative type, and their connection with the theory of integral equations.
\newblock {\em Philosophical transactions of the royal society of London. Series A, containing papers of a mathematical or physical character}, 209(441-458):415--446, 1909.

\bibitem{micchelli2005learning}
Charles~A Micchelli and Massimiliano Pontil.
\newblock On learning vector-valued functions.
\newblock {\em Neural computation}, 17(1):177--204, 2005.

\bibitem{minh2016unifying}
Ha~Quang Minh, Loris Bazzani, and Vittorio Murino.
\newblock A unifying framework in vector-valued reproducing kernel {H}ilbert spaces for manifold regularization and co-regularized multi-view learning.
\newblock {\em Journal of Machine Learning Research}, 17(25):1--72, 2016.

\bibitem{moore1general}
E.~H. Moore.
\newblock General analysis, part 2: The fundamental notions of general analysis. {E}dited by {R. W. Barnard}.
\newblock {\em Memoirs of the American Philosophical Society}, 1, 2013.

\bibitem{muandet2017kernel}
Krikamol Muandet, Kenji Fukumizu, Bharath Sriperumbudur, Bernhard Sch{\"o}lkopf, et~al.
\newblock Kernel mean embedding of distributions: A review and beyond.
\newblock {\em Foundations and Trends{\textregistered} in Machine Learning}, 10(1-2):1--141, 2017.

\bibitem{nevanlinna1919uber}
Rolf Nevanlinna.
\newblock Über beschrankte funktionen, die in gegeben punkten vorgeschrieben werte annehmen.
\newblock {\em Ann. Acad. Sci. Fenn. Ser. A 1 Mat. Dissertationes}, 1919.

\bibitem{paulsen2016introduction}
Vern~I Paulsen and Mrinal Raghupathi.
\newblock {\em An introduction to the theory of reproducing kernel {H}ilbert spaces}, volume 152.
\newblock Cambridge University Press, 2016.

\bibitem{pick1915beschrankungen}
Georg Pick.
\newblock {\"U}ber die beschr{\"a}nkungen analytischer funktionen, welche durch vorgegebene funktionswerte bewirkt werden.
\newblock {\em Mathematische Annalen}, 77(1):7--23, 1915.

\bibitem{saitoh2016theory}
Saburou Saitoh, Yoshihiro Sawano, et~al.
\newblock {\em Theory of reproducing kernels and applications}.
\newblock Springer, 2016.

\bibitem{scholkopf2001generalized}
Bernhard Sch{\"o}lkopf, Ralf Herbrich, and Alex~J Smola.
\newblock A generalized representer theorem.
\newblock In {\em International Conference on Computational Learning Theory}, pages 416--426. Springer, 2001.

\bibitem{seto2014gram}
Michio Seto, Sho Suda, and Tetsuji Taniguchi.
\newblock Gram matrices of reproducing kernel {H}ilbert spaces over graphs.
\newblock {\em Linear Algebra and its Applications}, 445:56--68, 2014.

\bibitem{song2008learning}
Le~Song.
\newblock {\em Learning via Hilbert space embedding of distributions}.
\newblock PhD thesis, School of Information Technologies, University of Sydney, 2008.

\bibitem{tannenbaum1982modified}
Allen Tannenbaum.
\newblock Modified {N}evanlinna-{P}ick interpolation and feedback stabilization of linear plants with uncertainty in the gain factor.
\newblock {\em International Journal of Control}, 36(2):331--336, 1982.

\bibitem{tibshirani1996regression}
Robert Tibshirani.
\newblock Regression shrinkage and selection via the lasso.
\newblock {\em Journal of the Royal Statistical Society Series B: Statistical Methodology}, 58(1):267--288, 1996.

\bibitem{vapnik1998statistical}
Vladimir Vapnik.
\newblock Statistical learning theory {W}iley.
\newblock {\em New York}, 1(624):2, 1998.

\bibitem{vapnik2013nature}
Vladimir Vapnik.
\newblock {\em The nature of statistical learning theory}.
\newblock Springer {S}cience \& {B}usiness {M}edia, 2013.

\bibitem{vapnik1974theory}
Vladimir Vapnik, Alexey Chervonenkis, et~al.
\newblock Theory of pattern recognition, 1974.

\bibitem{wahba1990spline}
Grace Wahba.
\newblock {\em Spline {M}odels for {O}bservational {D}ata}.
\newblock SIAM, 1990.

\bibitem{zaremba1907lequation}
Stanislaw Zaremba.
\newblock {\em L'{\'e}quation biharmonique et une classe remarquable de fonctions fondamentales harmoniques}.
\newblock Imprimerie de l'Universite, 1907.

\end{thebibliography}

\end{document}